\newtheorem{thm}{Theorem}[section]
\newtheorem{lem}{Lemma}[section]
\newtheorem{prop}{Proposition}[section]
\begin{document}
\title[ ]{ Quasi-Fredholm and Saphar spectrums for the $\alpha$-times integrated semigroups }

\author[ A. Tajmouati,  A. El Bakkali, M.B. Mohamed Ahmed and H. Boua
\\]
{ A. Tajmouati, A. El Bakkali, M.B. Mohamed Ahmed and H. Boua}
\address{A. Tajmouati, M.B. Mohamed Ahmed and H. Boua\newline
 Sidi Mohamed Ben Abdellah
 Univeristy,
 Faculty of Sciences Dhar Al Mahraz, Fez,  Morocco.}
\email{abdelaziz.tajmouati@usmba.ac.ma}
\email{bbaba2012@gmail.com}
\email{hamid12boua@yahoo.com}
\address{A. El Bakkali  \newline
Department of Mathematics
University Chouaib Doukkali,
Faculty of Sciences.
24000, Eljadida, Morocco.}
\email{aba0101q@yahoo.fr}
\subjclass[2010]{47D62, 47A10}
\keywords{$\alpha$-times integrated semigroup, quasi-Fredholm, Kato, Saphar, essentially Kato and Saphar.}
\maketitle
\begin{abstract}
We continue to study $\alpha$-times integrated semigroups. Essentially, we characterize the different spectrums of $\alpha$-times integrated semigroups by the spectrums of their generators. Particulary quasi-Fredholm, Kato, essentially Kato, Saphar and essentially Saphar spectrums.
\end{abstract}
\section{Introduction}
Let $X$ be a complex Banach space and $\mathcal{B}(X)$ the algebra of all bounded linear operators on  $X$. We denote by $D(T)$, $R(T)$, $R^\infty(T):=\cap_{n\geq 1}R(T^n)$, $N(T)$, $\rho(T)$, $\sigma(T),$ and $\sigma_p(T)$ respectively the domain, the range, the hyper range, the kernel, the resolvent and the spectrum of $T$, where
$\sigma(T)=\{\lambda\in\mathds{C}\,\backslash \, \lambda-T \,\mbox{is not bijective}\}$ and $\sigma_p(T)=\{\lambda\in\mathds{C}\,\backslash \, \lambda-T \,\mbox{is not one to one}\}.$ The function resolvent of $T\in\mathcal{B}(X)$ is defined for all $\lambda\in\rho(T)$ by $R(\lambda,T)=(\lambda -T)^{-1}.$
An operator $T$ is called Kato, in symbol $T\in\mathcal{D}(X)$, if $R(T)$ is closed and $N(T)\subseteq R^\infty(T)$.
An operator $T$ is called essentially Kato, in symbol $T\in e\mathcal{D}(X)$, if $R(T)$ is closed and $N(T)\subseteq_e R^\infty(T)$.
An operator $T$ is called relatively regular if there exists $S$ such that $TST=T$.\\
For the subspaces $M$ and $N$ of $X$ we write $M\subseteq_e N$ if there exists a finite-dimensional subspace $F\subseteq X$ such that $M\subseteq N +F$. We can choose $F$ satisfying $F\subseteq M$ and $F\cap N=\varnothing.$
An operator $T$ is called Saphar, in symbol $T\in\mathcal{S}(X)$, if $T$ is relatively regular and $N(T)\subseteq_e R^\infty(T)$.
An operator $T$ is called essentially Saphar, in symbol $T\in e\mathcal{S}(X)$, if $T$ is relatively regular and $N(T)\subseteq_e R^\infty(T)$.
The Kato, essentially Kato , Saphar and essentially Saphar spectrums are defined by
$$\sigma_K(T)=\{\lambda\in\mathds{C}\,\backslash\, \lambda-T\in \mathcal{D}(X)\};$$
$$\sigma_{eK}(T)=\{\lambda\in\mathds{C}\,\backslash\, \lambda-T\in e\mathcal{D}(X)\};$$
 $$\sigma_S(T)=\{\lambda\in\mathds{C}\,\backslash\, \lambda-T\in \mathcal{S}(X)\};$$
$$\sigma_{eS}(T)=\{\lambda\in\mathds{C}\,\backslash\, \lambda-T\in e\mathcal{S}(X)\};$$
The degree of stable iteration $dis(T)$ of an operator $T$ is defined by
  $$dis(T)=inf\{n\in\mathds{N}\,\backslash\,\forall m\geq n, R(T^n)\cap N(T)=R(T^m)\cap N(T)\}.$$ An operator $T$ is called quasi-Fredholm, in symbol $T\in q\Phi(X)$, if there exists $d\in\mathds{N}$ such that $R(T^n)$ and $R(T)+N(T^n)$ are closed for all $n\geq d$ and $dis(T)=d$. The quasi-Fredholm spectrum is defined by
  $$\sigma_{qe}(T)=\{\lambda\in\mathds{C}\,\backslash\, \lambda-T\notin q\Phi(X)\}.$$
 Let $\alpha\geq 0$ and let $A$ be a linear operator on a Banach space $X$. We recall that $A$ is the generator of an $\alpha$-times integrated semigroup
$(S(t))_{t\geq 0}$ on $X$ \cite{r.5} if
$]\omega, +\infty[ \subseteq \rho(A)$ for some $\omega\in \mathds{R}$
and there exists a strongly continuous mapping $S: [0, +\infty[ \rightarrow \mathcal{B}(X)$ satisfying
\begin{eqnarray*}
\|S(t)\|&\leq& Me^{\omega t} \,\,\mbox{for all}\,\, t\geq 0 \,\,\mbox{ and some }\,\, M > 0;\\
R(\lambda, A)&=& \lambda^\alpha\int_0^{+\infty} e^{-\lambda t}S(t)ds \,\,\mbox{for all}\,\, \lambda> \max\{\omega, 0\},
\end{eqnarray*}
in this case, $(S(t))_{t\geq 0}$ is called an $\alpha$-times integrated semigroup and the domain of its generator $A$ is defined by
$$D(A)=\{x\in X\, / \,\int_0^tS(s)Axds=S(t)x-\frac{t^\alpha x}{\Gamma(\alpha+1)}\},$$
where $\Gamma$ is the Euler integral giving by $$\Gamma(\alpha+1)=\int_0^{+\infty} x^\alpha e^{-x}dx.$$
We know that $(S(t))_{t\geq 0}\subseteq\mathcal{B}(X)$ is an $\alpha$-times integrated semigroup if and only if
$$S(t+s)=\frac{1}{\Gamma(\alpha)}[\int_t^{t+s}(t+s-r)^{\alpha-1}S(r)xdr-
\int_0^{s}(t+s-r)^{\alpha-1}S(r)xdr]$$ for all $x\in X$ and all $t,s\geq 0$.\\
In \cite{r.3}, the authors have studied the different spectrums of the 1-times integrated semigroups.
In our paper \cite{r.12}, we have studied descent, ascent, Drazin, Fredholm and Browder spectrums of an $\alpha$-times integrated semigroup.
Also in \cite{r.13}, we have investigated essential ascent and descent, upper and lower semi-Fredholm and semi-Browder spectrums of an $\alpha$-times integrated semigroup.
In this paper, we continue to study the $\alpha$-times integrated semigroups for all $\alpha>0$. We investigate the relationships between the different spectrums of the $\alpha$-times integrated semigroups and their generators, precisely quasi-Fredholm, Kato, essentially Kato, Saphar and essentially Saphar spectrums.

\section{Main results}

\begin{lem}\label{l0}\cite[Proposition 2.4]{r.1}
Let $A$ be the generator of an $\alpha$-times integrated semigroup $(S(t))_{t\geq 0}\subseteq \mathcal{B}(X)$ where $\alpha \geq 0.$ Then for all $x\in D(A)$ and all $t\geq 0$ we have
\begin{enumerate}
  \item $S(t)x\in D(A)$ and $AS(t)x=S(t)Ax.$
  \item $S(t)x=\frac{t^\alpha}{\Gamma(\alpha+1)}x+\int_0^t S(s)Axds.$
\end{enumerate}
Moreover, for all $x\in X$ we get $\int_0^t S(s)xds\in D(A)$ and
  $$A\int_0^t S(s)xds=S(t)x -\frac{t^\alpha}{\Gamma(\alpha+1)}x.$$
\end{lem}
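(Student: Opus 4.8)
The plan is to notice that the entire statement collapses onto the final Moreover identity, which carries the only genuine analytic content. Item (2) is just a transcription of the definition of $D(A)$: for $x\in D(A)$ the defining equality $\int_0^t S(s)Ax\,ds=S(t)x-\frac{t^\alpha}{\Gamma(\alpha+1)}x$ is literally $S(t)x=\frac{t^\alpha}{\Gamma(\alpha+1)}x+\int_0^t S(s)Ax\,ds$, so (2) needs no argument once one reads ``$x\in D(A)$'' as the existence of a vector $y=Ax$ solving that integral equation.

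Then I would obtain item (1) from the Moreover part together with (2). Fixing $x\in D(A)$ and applying the Moreover statement to $Ax\in X$ yields $\int_0^t S(s)Ax\,ds\in D(A)$ with $A\int_0^t S(s)Ax\,ds=S(t)Ax-\frac{t^\alpha}{\Gamma(\alpha+1)}Ax$. By (2) the vector $\int_0^t S(s)Ax\,ds$ is exactly $S(t)x-\frac{t^\alpha}{\Gamma(\alpha+1)}x$, so this vector lies in $D(A)$; since $D(A)$ is a subspace containing $\frac{t^\alpha}{\Gamma(\alpha+1)}x$, adding this term back gives $S(t)x\in D(A)$, and computing $AS(t)x$ by linearity cancels the two copies of $\frac{t^\alpha}{\Gamma(\alpha+1)}Ax$ to leave $AS(t)x=S(t)Ax$.

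The core is thus the Moreover identity, that for every $x\in X$ and $t\geq 0$ one has $u(t):=\int_0^t S(s)x\,ds\in D(A)$ and $Au(t)=w(t):=S(t)x-\frac{t^\alpha}{\Gamma(\alpha+1)}x$, and I would establish it via the resolvent and uniqueness of the Laplace transform. Fix $\lambda>\max\{\omega,0\}$; the identity to verify is $R(\lambda,A)\big(\lambda u(t)-w(t)\big)=u(t)$ for every $t$, which is equivalent to $u(t)\in D(A)$ with $Au(t)=w(t)$ because $R(\lambda,A)=(\lambda-A)^{-1}$. Both sides are strongly continuous and exponentially bounded in $t$ (from $\|S(t)\|\leq Me^{\omega t}$), so it suffices to match their Laplace transforms in $t$. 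Writing $\hat S(\mu):=\int_0^\infty e^{-\mu t}S(t)x\,dt=\mu^{-\alpha}R(\mu,A)x$ and using $\mathcal{L}[t^\alpha](\mu)=\Gamma(\alpha+1)\mu^{-\alpha-1}$ with the integration rule gives $\hat u(\mu)=\mu^{-1}\hat S(\mu)=\mu^{-\alpha-1}R(\mu,A)x$ and $\hat w(\mu)=\mu^{-\alpha}R(\mu,A)x-\mu^{-\alpha-1}x$, so $\lambda\hat u(\mu)-\hat w(\mu)=\mu^{-\alpha-1}\big[(\lambda-\mu)R(\mu,A)x+x\big]$. Applying the bounded operator $R(\lambda,A)$ under the integral and using the resolvent identity $(\lambda-\mu)R(\lambda,A)R(\mu,A)=R(\mu,A)-R(\lambda,A)$ collapses the right-hand side to $\mu^{-\alpha-1}R(\mu,A)x=\hat u(\mu)$, so the two transforms agree for all large $\mu$.

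The routine ingredients — strong continuity, the exponential bounds, and pulling the bounded operator $R(\lambda,A)$ through the defining Laplace integrals — are harmless. The step demanding care, and the main obstacle, is the rigorous invocation of Laplace-transform uniqueness for Banach-space-valued functions: I must confirm that $u$ and $w$ are continuous and of exponential type so the vector-valued uniqueness theorem applies, and keep the resolvent identity restricted to $\mu\neq\lambda$, extending the agreement from a punctured right half-line by analyticity. A more computational alternative would feed the stated functional equation for $S(t+s)$ directly into the domain criterion, but the transform route is shorter and avoids the fractional-power kernel. Once the Moreover identity is secured, items (2) and (1) follow as above.
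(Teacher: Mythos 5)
The paper never proves this lemma: it is imported verbatim from Hieber's Proposition 2.4 (reference \cite[Proposition 2.4]{r.1}), so there is no internal proof to compare yours against, and a self-contained argument is genuinely added value here. Judged on its own terms, your proof is correct. The reduction is sound: relative to the paper's displayed description of $D(A)$, item (2) is indeed a restatement, and your derivation of item (1) --- apply the ``Moreover'' identity to $Ax$, identify $\int_0^t S(s)Ax\,ds$ with $S(t)x-\frac{t^\alpha}{\Gamma(\alpha+1)}x$ via (2), then use that $D(A)$ is a subspace containing $x$ so the two copies of $\frac{t^\alpha}{\Gamma(\alpha+1)}Ax$ cancel --- is clean. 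The core computation also checks out: $\hat u(\mu)=\mu^{-\alpha-1}R(\mu,A)x$, $\hat w(\mu)=\mu^{-\alpha}R(\mu,A)x-\mu^{-\alpha-1}x$, hence $\lambda\hat u(\mu)-\hat w(\mu)=\mu^{-\alpha-1}\bigl[(\lambda-\mu)R(\mu,A)x+x\bigr]$, and the resolvent identity collapses $R(\lambda,A)\bigl[\lambda\hat u(\mu)-\hat w(\mu)\bigr]$ to $\hat u(\mu)$; since $t\mapsto R(\lambda,A)(\lambda u(t)-w(t))$ and $t\mapsto u(t)$ are continuous and exponentially bounded, the uniqueness theorem for vector-valued Laplace transforms (available in the cited Arendt reference \cite{r.5}) gives the identity for all $t$, and that identity is equivalent to $u(t)\in D(A)$ with $Au(t)=w(t)$.

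Two minor remarks. First, your caution about puncturing the half-line at $\mu=\lambda$ is unnecessary: the identity $(\lambda-\mu)R(\lambda,A)R(\mu,A)=R(\mu,A)-R(\lambda,A)$ holds trivially at $\mu=\lambda$, so no analytic-continuation step is needed. Second, there is a mild foundational mismatch worth flagging: your proof of the ``Moreover'' part treats $D(A)$ as the domain of the operator whose resolvent satisfies $R(\mu,A)=\mu^\alpha\int_0^{\infty}e^{-\mu t}S(t)\,dt$, while your item (2) takes $D(A)$ to be the set cut out by the integral equation. The paper asserts both descriptions of the same operator, so relative to the paper your reading is legitimate; but in a treatment where the generator is defined through its resolvent (as in Hieber), item (2) is not a tautology --- it follows from the ``Moreover'' identity together with the commutation $R(\lambda,A)S(t)=S(t)R(\lambda,A)$, which is itself a one-line Laplace-uniqueness argument of exactly the kind you already use. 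With that point acknowledged, your proof stands, and its transform-based route is in the same spirit as the original source the paper cites.
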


We begin by the lemmas.
\begin{lem}\label{l1} Let $A$ be the generator of an $\alpha$-times integrated semigroup $(S(t))_{t\geq 0}$ with $\alpha>0$. Then for all $\lambda\in\mathds{C}$ and all $t\geq 0$
\begin{enumerate}
\item $(\lambda-A)D_\lambda(t)x=\int_0^t e^{\lambda(t-s)}\frac{s^{\alpha-1} x}{\Gamma(\alpha)}ds-S(t)x,\,\,\forall x\in X$  where $$D_\lambda(t)x=\int_0^t e^{\lambda (t-r)}S(r)dr;$$
\item $D_\lambda(t)(\lambda-A)x=\int_0^t e^{\lambda (t-s)}\frac{s^{\alpha-1} x}{\Gamma(\alpha)}ds-S(t)x,\,\,\forall x\in D(A)$.
\end{enumerate}
\end{lem}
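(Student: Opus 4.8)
The plan is to prove both identities by reducing them to Lemma \ref{l0}, using the operator $D_\lambda(t)x = \int_0^t e^{\lambda(t-r)}S(r)x\,dr$ as a kind of Laplace-weighted average of the semigroup. The key observation is that Lemma \ref{l0} already tells us how $A$ interacts with integrals of $S$: for any $x\in X$, $A\int_0^t S(s)x\,ds = S(t)x - \frac{t^\alpha}{\Gamma(\alpha+1)}x$, and for $x\in D(A)$ we have the companion formula $S(t)x = \frac{t^\alpha}{\Gamma(\alpha+1)}x + \int_0^t S(s)Ax\,ds$. So the strategy is to express $(\lambda - A)D_\lambda(t)x = \lambda D_\lambda(t)x - A D_\lambda(t)x$ and compute the second term by pulling $A$ through the integral.

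For part (1), I would first argue that $D_\lambda(t)x \in D(A)$ and that $A$ can be moved inside the integral. Writing $e^{\lambda(t-r)} = \sum_{k\ge 0}\frac{\lambda^k(t-r)^k}{k!}$, each term $\int_0^t \frac{(t-r)^k}{k!}S(r)x\,dr$ lands in $D(A)$ by iterating the integral identity of Lemma \ref{l0}, and $A$ acts on it producing boundary terms. The cleaner route, which I expect to work, is to integrate by parts in the variable $r$ so that derivatives of $e^{\lambda(t-r)}$ generate the factor $\lambda$, while the $A$ acting on $\int_0^t S(r)x\,dr$ is replaced using Lemma \ref{l0}. Concretely, one computes
\begin{equation*}
A D_\lambda(t)x = \int_0^t e^{\lambda(t-r)}\Big(S(r)x - \tfrac{r^\alpha}{\Gamma(\alpha+1)}x\Big)'_r\,dr
\end{equation*}
after an integration by parts that transfers the $A$ onto $\int_0^r S$, and then collecting terms. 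Combining $\lambda D_\lambda(t)x$ with this expression, the $\lambda\int e^{\lambda(t-r)}S(r)x\,dr$ contributions should cancel against the derivative of $S$-terms, leaving precisely $\int_0^t e^{\lambda(t-s)}\frac{s^{\alpha-1}}{\Gamma(\alpha)}x\,ds - S(t)x$; note $\frac{d}{ds}\frac{s^\alpha}{\Gamma(\alpha+1)} = \frac{s^{\alpha-1}}{\Gamma(\alpha)}$, which is exactly where the $\frac{s^{\alpha-1}}{\Gamma(\alpha)}$ kernel in the target identity comes from.

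For part (2), the argument is parallel but starts from $x\in D(A)$, so I may use the second identity of Lemma \ref{l0} directly: substitute $S(r)Ax$ via $\frac{d}{dr}S(r)x$ (in the integrated sense) and integrate the Laplace weight against it. Since $D_\lambda(t)$ commutes with $A$ on $D(A)$ — which follows from part (1) of Lemma \ref{l0}, namely $AS(r)x = S(r)Ax$ — the operators $D_\lambda(t)(\lambda-A)x$ and $(\lambda-A)D_\lambda(t)x$ agree on $D(A)$, so (2) should drop out of (1) once commutativity is established, or else be proved by the same integration-by-parts computation with $Ax$ in place of the boundary manipulation.

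The main obstacle I anticipate is the careful justification of moving $A$ (a closed, generally unbounded operator) inside the integral $\int_0^t e^{\lambda(t-r)}S(r)x\,dr$ and the attendant integration by parts: one must verify that the integrand stays in $D(A)$, that closedness of $A$ permits interchanging $A$ with the Riemann integral, and that the boundary terms at $r=0$ and $r=t$ are handled correctly (the $r=0$ boundary contributes the $S(t)x$ term and the $\frac{s^{\alpha-1}}{\Gamma(\alpha)}$ kernel arises from differentiating the $\frac{r^\alpha}{\Gamma(\alpha+1)}$ correction). Everything else is bookkeeping on the identities already supplied by Lemma \ref{l0}.
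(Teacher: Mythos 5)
Your strategy is sound and would yield a correct proof, but it takes a genuinely different route from the paper's, essentially in reverse order. You prove statement (1) directly: integrate by parts to rewrite $D_\lambda(t)x$ in terms of $F(r)=\int_0^r S(s)x\,ds$, pass the closed operator $A$ through the Riemann integral (using that $F(r)\in D(A)$ and that $AF(r)=S(r)x-\frac{r^\alpha}{\Gamma(\alpha+1)}x$ is continuous, by Lemma \ref{l0}), and then obtain (2) from (1) via the commutation $AD_\lambda(t)x=D_\lambda(t)Ax$ on $D(A)$. The paper does the opposite: it first proves the identity for $x\in D(A)$ (your (2)) by the purely ``bounded'' computation $S'(s)x=\frac{s^{\alpha-1}}{\Gamma(\alpha)}x+S(s)Ax$ followed by integration by parts (no unbounded operator ever crosses an integral sign), and then handles general $x\in X$ by resolvent regularization: write $x=(\mu-A)R(\mu,A)x$ for $\mu\in\rho(A)$, commute the \emph{bounded} operator $R(\mu,A)$ through all integrals, apply the $D(A)$-case identity to $R(\mu,A)x$, and read off $D_\lambda(t)x\in D(A)$ from the fact that $D_\lambda(t)x$ lands in the range of $R(\mu,A)$. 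What your route buys is brevity and directness; what it costs is exactly the interchange lemma for closed operators ($A\int f=\int Af$ when $f$ and $Af$ are continuous and $D(A)$-valued), which you correctly flag as the main obstacle but leave unproved --- it is a standard fact, and it applies here since $\rho(A)\neq\varnothing$ forces $A$ to be closed, so this is not a fatal gap, but it is the one step the paper's resolvent trick is specifically designed to avoid. Two small repairs you would need: your displayed formula involving $\bigl(S(r)x-\tfrac{r^\alpha}{\Gamma(\alpha+1)}x\bigr)'_r$ is not meaningful for general $x\in X$, since $r\mapsto S(r)x$ need not be differentiable outside $D(A)$ (the computation must be routed through $F(r)$ as above, or restricted to $x\in D(A)$ as the paper does); and the $S(t)x$ term arises from the boundary at $r=t$, not $r=0$ --- the $r=0$ contribution vanishes because $F(0)=0$ and $S(0)=0$.
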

\begin{proof}
\begin{enumerate}
 \item By Lemma \ref{l0}, we know that for all $x\in D(A)$
 $$S(s)x=\frac{s^\alpha}{\Gamma(\alpha+1)}x+\int_0^sS(r)Axdr.$$
 Then, since $\Gamma(\alpha+1)=\alpha\Gamma(\alpha)$, we obtain
 $$S'(s)x= \frac{s^{\alpha-1}}{\Gamma(\alpha)}x+S(s)Ax.$$
 Therefore, we conclude that
 \begin{eqnarray*}
 D_\lambda(t)Ax &=&\int_0^te^{\lambda(t-s)}S(s)Axds\\
  &=& \int_0^te^{\lambda(t-s)}[S'(s)x-\frac{s^{\alpha-1}}{\Gamma(\alpha)}x]ds\\
 &=& \int_0^te^{\lambda(t-s)}S'(s)xds-
 \int_0^te^{\lambda(t-s)}\frac{s^{\alpha-1}}{\Gamma(\alpha)}xds\\
 &=& S(t)x+\lambda D_\lambda(t)x-
   \int_0^te^{\lambda(t-s)}\frac{s^{\alpha-1}}{\Gamma(\alpha)}xds
     \end{eqnarray*}
Finally, we obtain for all $x\in D(A)$
$$D_\lambda(t)(\lambda-A)x= \big[\int_0^t e^{\lambda(t-s)}\frac{s^{\alpha-1}}{\Gamma(\alpha)}ds - S(t)\big]x.$$
\item  Let $\mu\in \rho(A)$. From proof of Lemma \ref{l0}, we have
for all $x\in X$ $$R(\mu,A)S(s)x=S(s)R(\mu,A)x.$$
Hence, for all $x\in X$ we conclude
\begin{eqnarray*}
R(\mu,A)D_\lambda(t)x &=& R(\mu,A)\int_0^te^{\lambda (t-s)}S(s)xds\\
   &=& \int_0^te^{\lambda (t-s)}R(\mu,A)S(s)xds\\
   &=& \int_0^te^{\lambda (t-s)}S(s)R(\mu,A)xds\\
&=& D_\lambda(t)R(\mu,A)x.
   \end{eqnarray*}
Therefore, we obtain for all $x\in X$
 \begin{eqnarray*}
 D_\lambda(t)x &=& \int_0^te^{\lambda (t-s)}S(s)xds\\
&=& \int_0^te^{\lambda (t-s)}S(s)(\mu-A)R(\mu,A)xds  \\
&=& \mu \int_0^te^{\lambda (t-s)} S(s)R(\mu,A)xds -\int_0^te^{\lambda (t-s)}S(s)AR(\mu,A)xds \\
&=& \mu \int_0^te^{\lambda (t-s)} R(\mu,A)S(s)xds -\int_0^te^{\lambda (t-s)}S(s)AR(\mu,A)xds \\
&=& \mu R(\mu,A)\int_0^te^{\lambda (t-s)} S(s)xds -\int_0^te^{\lambda (t-s)}S(s)AR(\mu,A)xds \\
&=& \mu R(\mu,A)D_\lambda(t)x-D_\lambda(t)AR(\mu,A)x\\
&=& \mu R(\mu,A)D_\lambda(t)x-[S(t)R(\mu,A)x+\lambda D_\lambda(t)R(\mu,A)x-\int_0^te^{\lambda(t-s)}\frac{s^{\alpha-1}}{\Gamma(\alpha)}R(\mu,A)xds]\\
&=& \mu R(\mu,A)D_\lambda(t)x-[R(\mu,A)S(t)x+\lambda R(\mu,A)D_\lambda(t)x-R(\mu,A)\int_0^te^{\lambda(t-s)}\frac{s^{\alpha-1}}{\Gamma(\alpha)}xds]\\ &=&  R(\mu,A)\big[(\mu-\lambda)D_\lambda(t)x-S(t)x+
\int_0^te^{\lambda(t-s)}\frac{s^{\alpha-1}}{\Gamma(\alpha)}xds\big]
\end{eqnarray*}
Therefore, for all $x\in X$ we have $D_\lambda(t)x\in D(A)$ and
$$(\mu-A)D_\lambda(t)x=(\mu-\lambda) D_\lambda(t)x+\int_0^t e^{\lambda(t-s)}\frac{s^{\alpha-1}}{\Gamma(\alpha)}xds -
S(t)x.$$
Finally, for all $x\in X$ and all $\lambda\in \mathds{C}$ we obtain
$$(\lambda-A)D_\lambda(t)x=\big[\int_0^t e^{\lambda(t-s)}\frac{s^{\alpha-1}}{\Gamma(\alpha)}ds -
S(t)\big]x.$$
\end{enumerate}
\end{proof}

\begin{lem}\label{l2} Let $A$ be the generator of an $\alpha$-times integrated semigroup $(S(t))_{t\geq 0}$ with $\alpha>0$. Then for all $\lambda\in\mathds{C}$, all $t\geq 0$ and all $x\in X$
\begin{enumerate}
\item We have the identity  $$\,\,(\lambda-A)L_\lambda(t)+\varphi_\lambda(t)D_\lambda(t)=\phi_\lambda(t)I,$$
    where $L_\lambda(t)=\int_0^t e^{-\lambda s }D_\lambda(s)ds,\, \varphi_\lambda(t)=e^{\lambda t}\,\mbox{and}\, \phi_\lambda(t)=\int_0^t\int_0^\tau e^{-\lambda r}\frac{r^{\alpha-1}}{\Gamma(\alpha)}drd\tau.$\\
Moreover, the operator $L_\lambda(t)$ is commute with each one of $D_\lambda(t)$and $(\lambda-A)$.
\item For all $n\in\mathds{N}^*,$ there exists an $L_{\lambda,n}(t)\in \mathcal{B}(X)$ such that $$(\lambda-A)L_{\lambda,n}(t)+[\varphi_\lambda(t)]^n[D_\lambda(t)]^n=
    [\phi_\lambda(t)]^nI.$$
    Moreover, the operator $L_{\lambda,n}(t)$ is commute with each one of $D_\lambda(t)$ and $\lambda-A$.
\item For all $n\in\mathds{N}^*,$ there exists an operator $D_{\lambda,n}(t)\in \mathcal{B}(X)$ such that $$(\lambda-A)^n[L_\lambda(t)]^n+D_{\lambda,n}(t)D_\lambda(t)=[\phi_\lambda(t)]^nI.$$
    Moreover, the operator $D_{\lambda,n}(t)$ is commute with each one of $D_\lambda(t)$, $L_\lambda(t)$ and $\lambda-A$.
 \item  For all $n\in\mathds{N}^*,$ there exists an operator $K_{\lambda,n}(t)\in \mathcal{B}(X)$ such that
$$(\lambda-A)^nK_{\lambda,n}(t)+[D_\lambda(t)]^n[D_{\lambda,n}(t)]^n =[\phi_\lambda(t)]^{n^2}I,$$
Moreover, the operator $K_{\lambda,n}(t)$ is commute with each one of $D_\lambda(t)$, $D_{\lambda,n}(t)$ and $\lambda-A$.
\end{enumerate}
\end{lem}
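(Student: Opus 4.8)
The plan is to derive part (1) directly from Lemma \ref{l1} and then to obtain (2)--(4) as purely algebraic consequences of (1) by raising a single operator identity to successive powers. For (1), I would start from $L_\lambda(t)x=\int_0^t e^{-\lambda s}D_\lambda(s)x\,ds$ and recall from the proof of Lemma \ref{l1}(2) that $D_\lambda(s)x\in D(A)$ for every $x\in X$, with $s\mapsto AD_\lambda(s)x$ continuous; since $A$ is closed this lets me pull $\lambda-A$ inside the integral, so that $(\lambda-A)L_\lambda(t)x=\int_0^t e^{-\lambda s}(\lambda-A)D_\lambda(s)x\,ds$. Substituting the formula of Lemma \ref{l1}(1) for $(\lambda-A)D_\lambda(s)x$ splits this into a double integral and a single integral. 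The factor $e^{-\lambda s}e^{\lambda(s-r)}=e^{-\lambda r}$ collapses the double integral, by Fubini, to $\phi_\lambda(t)x$; the remaining single integral $\int_0^t e^{-\lambda s}S(s)x\,ds$ is precisely $\varphi_\lambda(t)D_\lambda(t)x$. This yields $(\lambda-A)L_\lambda(t)+\varphi_\lambda(t)D_\lambda(t)=\phi_\lambda(t)I$.

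For the commutation claims in (1) I would argue as follows. That $L_\lambda(t)$ commutes with $\lambda-A$ is immediate on $D(A)$: for $x\in D(A)$ Lemma \ref{l1}(1) and (2) give the same operator, so $D_\lambda(s)(\lambda-A)x=(\lambda-A)D_\lambda(s)x$, and integrating against $e^{-\lambda s}$ (again using closedness of $A$) transfers this to $L_\lambda(t)$. That $L_\lambda(t)$ commutes with $D_\lambda(t)$ reduces, through the integral defining $L_\lambda(t)$, to the mutual commutativity of the operators $D_\lambda(\cdot)$; this in turn follows from the fact that each $S(r)$ commutes with every resolvent $R(\mu,A)$ — shown inside the proof of Lemma \ref{l1}(2) — together with injectivity of the Laplace transform in $R(\mu,A)=\mu^\alpha\int_0^{+\infty}e^{-\mu u}S(u)\,du$, which forces $S(r)S(r')=S(r')S(r)$ and hence the commutativity of all the operators $D_\lambda(\cdot)$, $L_\lambda(t)$ and $R(\mu,A)$ among themselves.

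The engine for (2)--(4) is a single structural remark that I would isolate first. Write $B:=(\lambda-A)L_\lambda(t)=\phi_\lambda(t)I-\varphi_\lambda(t)D_\lambda(t)\in\mathcal{B}(X)$. Since $R(D_\lambda(s))\subseteq D(A)$ gives $R(L_\lambda(t))\subseteq D(A)$, and $L_\lambda(t)$ commutes with $A$ on $D(A)$, one checks by induction that $L_\lambda(t)^p$ maps $X$ into $D(A^p)$; consequently, for $p\geq m$,
\[
(\lambda-A)^m L_\lambda(t)^p=\big[(\lambda-A)L_\lambda(t)\big]^m L_\lambda(t)^{p-m}=B^m L_\lambda(t)^{p-m}\in\mathcal{B}(X).
\]
This regrouping is what keeps bounded every operator produced below. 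Now (2) comes from rewriting (1) as $\varphi_\lambda(t)D_\lambda(t)=\phi_\lambda(t)I-(\lambda-A)L_\lambda(t)$ and raising to the $n$-th power: the binomial theorem applies because all factors commute, the $k=0$ term is $[\phi_\lambda(t)]^nI$, and every $k\ge1$ term carries a factor $(\lambda-A)$, which I pull to the left; the remaining bracket, bounded by the regrouping above, is $-L_{\lambda,n}(t)$. Part (3) is the symmetric computation: raising $(\lambda-A)L_\lambda(t)=\phi_\lambda(t)I-\varphi_\lambda(t)D_\lambda(t)$ to the $n$-th power gives $(\lambda-A)^nL_\lambda(t)^n=(\phi_\lambda(t)I-\varphi_\lambda(t)D_\lambda(t))^n$, and factoring one copy of $D_\lambda(t)$ out of each $k\ge1$ term produces the bounded $D_{\lambda,n}(t)$. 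Finally (4) is obtained by raising the identity of (3) to the $n$-th power: since $D_\lambda(t)$ and $D_{\lambda,n}(t)$ commute the left side becomes $[D_\lambda(t)]^n[D_{\lambda,n}(t)]^n$, the $j=0$ term on the right is $[\phi_\lambda(t)]^{n^2}I$, and each $j\ge1$ term carries a factor $(\lambda-A)^{nj}$ with $nj\geq n$, so $(\lambda-A)^n$ can be pulled out, the remainder being bounded (again via $B$) and defining $K_{\lambda,n}(t)$. In every case the stated commutation relations are inherited because the new operator is built only from $B$, $L_\lambda(t)$, $D_\lambda(t)$ and scalars.

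The main obstacle is not the algebra but the boundedness bookkeeping: a naive binomial expansion manufactures terms $(\lambda-A)^m L_\lambda(t)^p$ with $m$ as large as $n^2$, and $\lambda-A$ is unbounded, so the claim $L_{\lambda,n}(t),D_{\lambda,n}(t),K_{\lambda,n}(t)\in\mathcal{B}(X)$ is meaningful only once these are regrouped as $B^mL_\lambda(t)^{p-m}$. The genuine point is therefore the one display above, which rests on $R(L_\lambda(t))\subseteq D(A)$ and on $L_\lambda(t)$ commuting with $A$; establishing these domain inclusions and the commutation relations carefully (so that both the binomial theorem and the regrouping are legitimate) is where the real work lies, while the index and sign tracking in the three power-raising steps is routine.
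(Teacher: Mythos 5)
Your proof is correct and follows essentially the same route as the paper: part (1) is obtained by integrating the identity of Lemma \ref{l1} against $e^{-\lambda s}$ (the paper inserts a resolvent $R(\mu,A)$ to justify the interchange where you invoke closedness of $A$, an immaterial difference), and parts (2)--(4) come from the same binomial expansions with $(\lambda-A)$, respectively $D_\lambda(t)$, factored out of the $i\geq 1$ terms. If anything, your explicit regrouping $(\lambda-A)^m L_\lambda(t)^p=B^m L_\lambda(t)^{p-m}$ with $B=(\lambda-A)L_\lambda(t)$ justifies the boundedness of $L_{\lambda,n}(t)$, $D_{\lambda,n}(t)$, $K_{\lambda,n}(t)$ more carefully than the paper, whose displayed formulas leave powers of the unbounded operator $\lambda-A$ in place and tacitly rely on exactly this regrouping.
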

\begin{proof}
\begin{enumerate}
\item Let $\mu\in \rho(A)$. By Lemma \ref{l1}, for all $x\in X$ we have $D_\lambda(s)x\in D(A)$ and hence
 \begin{eqnarray*}
 L_\lambda(t)x &=& \int_0^te^{-\lambda s}D_\lambda(s)xds\\
 &=& \int_0^te^{-\lambda s}R(\mu,A)(\mu-A)D_\lambda(s)xds\\
 &=& R(\mu,A)[\mu\int_0^te^{-\lambda s}D_\lambda(s)xds-\int_0^te^{-\lambda s}AD_\lambda(s)xds]\\
  &=& R(\mu,A)[\mu L_\lambda(t)x-\int_0^te^{-\lambda s}AD_\lambda(s)xds]
\end{eqnarray*}
Therefore for all $x\in X$, we have $L_\lambda(t)x\in D(A)$ and
$$(\mu-A)L_\lambda(t)x=\mu L_\lambda(t)x-\int_0^te^{-\lambda s}AD_\lambda(s)xds.$$
Thus
$$AL_\lambda(t)x=\int_0^te^{-\lambda s}AD_\lambda(s)xds.$$
Hence, we conclude that
\begin{eqnarray*}
(\lambda-A)L_\lambda(t)x&=&\lambda L_\lambda(t)x -\int_0^te^{-\lambda s}AD_\lambda(s)xds\\
&=&\lambda L_\lambda(t)x -\int_0^te^{-\lambda s}\big[\lambda D_\lambda(s)x-\int_0^s e^{\lambda(s-r)}\frac{r^{\alpha-1}}{\Gamma(\alpha)}xdr +
S(s)x\big]ds\\
&=&\lambda L_\lambda(t)x -\lambda\int_0^te^{-\lambda s}D_\lambda(s)x ds +\int_0^te^{-\lambda s}\int_0^s e^{\lambda(s-r)}\frac{r^{\alpha-1}}{\Gamma(\alpha)}xdrds -\int_0^te^{-\lambda s}
S(s)xds\\
&=&\lambda L_\lambda(t)x -\lambda L_\lambda(t)x +\int_0^t\int_0^s e^{-\lambda r}\frac{r^{\alpha-1}}{\Gamma(\alpha)}xdrds -e^{-\lambda t}\int_0^te^{\lambda (t-s)}S(s)xds  \\
&=&\int_0^t\int_0^s e^{-\lambda r}\frac{r^{\alpha-1}}{\Gamma(\alpha)}xdrds -e^{-\lambda t}D_\lambda (t)x  \\
 &=& \big[\phi_\lambda(t)I-\varphi_\lambda(t)D_\lambda(t)\big]x,
\end{eqnarray*}
where $\phi_\lambda(t)=\int_0^t\int_0^s e^{-\lambda r}\frac{r^{\alpha-1}}{\Gamma(\alpha)}drds$ and $\varphi_\lambda(t)=e^{-\lambda t}.$\\
Therefore, we obtain $$(\lambda-A)L_\lambda(t)+\varphi_\lambda(t)D_\lambda(t)=\phi_\lambda(t)I.$$
Since $S(s)S(t)=S(t)S(s)$ for all $s,t\geq 0$, then
 $D_\lambda(s)S(t)=S(t)D_\lambda(s).$\\
Hence
\begin{eqnarray*}
D_\lambda(t)D_\lambda(s) &=& \int_0^te^{\lambda(t-r)}S(r)D_\lambda(s)dr\\
&=& \int_0^te^{\lambda(t-r)}S(r)D_\lambda(s)dr\\
&=& \int_0^te^{\lambda(t-r)}D_\lambda(s)S(r)dr\\
&=& D_\lambda(s)\int_0^te^{\lambda(t-r)}S(r)dr\\
&=& D_\lambda(s)D_\lambda(t).
\end{eqnarray*}
Thus, we deduce that
\begin{eqnarray*}
D_\lambda(t)L_\lambda(t) &=& D_\lambda(t)\int_0^te^{-\lambda s}D_\lambda(s)ds\\
&=& \int_0^te^{-\lambda s}D_\lambda(t)D_\lambda(s)ds\\
&=& \int_0^te^{-\lambda s}D_\lambda(s)D_\lambda(t)ds\\
&=& \int_0^te^{-\lambda s}D_\lambda(s)dsD_\lambda(t)\\
&=& L_\lambda(t)D_\lambda(t).
\end{eqnarray*}
Since for all $x\in X$ $AL_\lambda(t)x=\int_0^te^{-\lambda s}AD_\lambda(s)xds$ and for all $x\in D(A)$ $AD_\lambda(s)x=D_\lambda(s)Ax,$
then we obtain for all $x\in D(A)$
\begin{eqnarray*}
(\lambda-A)L_\lambda(t)x &=& \lambda L_\lambda(t)x-AL_\lambda(t)x\\
&=& \lambda L_\lambda(t)x -\int_0^te^{-\lambda s}AD_\lambda(s)xds\\
&=& \lambda L_\lambda(t)x -\int_0^te^{-\lambda s}AD_\lambda(s)xds\\
&=&\lambda L_\lambda(t)x -\int_0^te^{-\lambda s}D_\lambda(s)Axds\\
&=& \lambda L_\lambda(t)x- L_\lambda(t)Ax\\
&=& L_\lambda(t)(\lambda-A)x.
\end{eqnarray*}
\item Since $(\lambda-A)L_\lambda(t)+\varphi_\lambda(t)D_\lambda(t)=\phi_\lambda(t)I$, then for all $n\in \mathds{N}^*$ we obtain
\begin{eqnarray*}
[\varphi_\lambda(t)D_\lambda(t)]^n &=&[\phi_\lambda(t)I-(\lambda-A)L_\lambda(t)]^n\\
&=&\sum_{i=0}^n C_n^i[\phi_\lambda(t)]^{n-i}[-(\lambda-A)L_\lambda(t)]^i\\
&=& [\phi_\lambda(t)]^nI -(\lambda-A)\sum_{i=1}^n C_n^i[\phi_\lambda(t)]^{n-i}[-(\lambda-A)]^{i-1}[L_\lambda(t)]^i\\
&=& [\phi_\lambda(t)]^nI -(\lambda-A)L_{\lambda,n}(t),
\end{eqnarray*}
where $$L_{\lambda,n}(t)=\sum_{i=1}^nC_n^i [\phi_\lambda(t)]^{n-i}[-(\lambda-A)]^{i-1}[L_\lambda(t)]^i.$$
Therefore, we have
$$(\lambda-A)L_{\lambda,n}(t)+[\varphi_\lambda(t)]^n[D_\lambda(t)]^n =[\phi_\lambda(t)]^nI.$$
Finally, for commutativity, it is clear that $L_{\lambda,n}(t)$ commute with each one of $D_\lambda(t)$ and $\lambda-A$.
\item For all $n\in \mathds{N}^*$, we obtain
\begin{eqnarray*}
[(\lambda-A)L_\lambda(t)]^n &=&[\phi_\lambda(t)I-\varphi_\lambda(t)D_\lambda(t)]^n\\
&=&\sum_{i=0}^n C_n^i[\phi_\lambda(t)]^{n-i}[-\varphi_\lambda(t)D_\lambda(t)]^i\\
&=& [\phi_\lambda(t)]^nI -D_\lambda(t)\sum_{i=1}^nC_n^i [\phi_\lambda(t)]^{n-i}[\varphi_\lambda(t)]^{i}[-D_\lambda(t)]^{i-1}\\
&=& [\phi_\lambda(t)]^nI -D_\lambda(t)D_{\lambda,n}(t),
\end{eqnarray*}
where $$D_{\lambda,n}(t)=\sum_{i=1}^n C_n^i [\phi_\lambda(t)]^{n-i}[\varphi_\lambda(t)]^{i}[-D_\lambda(t)]^{i-1}.$$
Therefore, we have
$$(\lambda-A)^n[L_\lambda(t)]^n+D_\lambda(t)D_{\lambda,n}(t)=[\phi_\lambda(t)]^nI.$$
Finally, for commutativity, it is clear that $D_{\lambda,n}(t)$ commute with each one of $D_\lambda(t)$, $L_\lambda(t)$ and $\lambda-A$.
\item
Since we have
$D_\lambda(t)D_{\lambda,n}(t)=[\phi_\lambda(t)]^nI-(\lambda-A)^n[L_\lambda(t)]^n,$
then for all $n\in \mathds{N}$
\begin{eqnarray*}
[D_\lambda(t)D_{\lambda,n}(t)]^n &=& \big[[\phi_\lambda(t)]^nI-(\lambda-A)^n[L_\lambda(t)]^n\big]^n\\
 &=& [\phi_\lambda(t)]^{n^2}I-\sum_{i=1}^n C_n^i \big[[\phi_\lambda(t)]^{n}\big]^{n-i}\big[(\lambda-A)^n[L_\lambda(t)]^n\big]^i\\
 &=& [\phi_\lambda(t)]^{n^2}I-(\lambda-A)^n\sum_{i=1}^nC_n^i \big[[\phi_\lambda(t)]^{n(n-i)}(\lambda-A)^{n(i-1)}[L_\lambda(t)]^{ni}\\
&=& [\phi_\lambda(t)]^{n^2}I-(\lambda-A)^nK_{\lambda,n}(t),
\end{eqnarray*}
where $K_{\lambda,n}(t)=\sum_{i=1}^nC_n^i
[\phi_\lambda(t)]^{n(n-i)}(\lambda-A)^{n(i-1)}[L_\lambda(t)]^{ni}.$
Hence we obtain
$$[D_\lambda(t)]^n[D_{\lambda,n}(t)]^n +(\lambda-A)^nK_{\lambda,n}(t) =[\phi_\lambda(t)]^{n^2}I.$$
Finally, the commutativity is clear.
\end{enumerate}
\end{proof}

Now, we prove this result.
\begin{prop}\label{p1} Let $A$ be the generator of an $\alpha$-times integrated semigroup
$(S(t))_{t\geq 0}$ with $\alpha>0$. For all $\lambda\in\mathds{C}$ and all $t\geq 0$, if $R[\int_0^te^{\lambda (t-s)}\frac{s^{\alpha-1}}{\Gamma(\alpha)}ds-S(t)]^n$ is closed, then $R(\lambda-A)^n$ is also closed.
\end{prop}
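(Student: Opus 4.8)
The plan is to read the operator in the statement as a factorization through $\lambda-A$ and then to exploit the Bézout-type identity of Lemma~\ref{l2}(4). Write $T=\lambda-A$, $D=D_\lambda(t)$, and let $B=\int_0^t e^{\lambda(t-s)}\frac{s^{\alpha-1}}{\Gamma(\alpha)}ds-S(t)$ be the bounded operator whose $n$-th power is assumed to have closed range. By Lemma~\ref{l1}(1) we have $TD=B$ on all of $X$; moreover $D$ maps $X$ into $D(A)$ and commutes with $\lambda-A$ (Lemma~\ref{l0}(1) together with Lemma~\ref{l1}), so $D^n$ maps $X$ into $D((\lambda-A)^n)$ and iterating yields $B^n=T^nD^n=D^nT^n$. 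In particular $R(B^n)=R(T^nD^n)\subseteq R(T^n)$, which gives one inclusion for free; the whole difficulty is the reverse implication, namely deducing closedness of $R(T^n)$ from that of $R(B^n)$.

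First I would show that $D^n$ carries $\overline{R(T^n)}$ into $R(B^n)$. Indeed, if $y\in\overline{R(T^n)}$, choose $x_k$ with $T^nx_k\to y$; then $D^ny=\lim_k D^nT^nx_k=\lim_k B^nx_k\in\overline{R(B^n)}=R(B^n)$, using the hypothesis that $R(B^n)$ is closed. Since $R(B^n)\subseteq R(T^n)$, we get in particular $D^ny\in R(T^n)$, say $D^ny=T^nz$.

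Next I would invoke Lemma~\ref{l2}(4): there is a bounded operator $K_{\lambda,n}(t)$, commuting with $D_\lambda(t)$, $D_{\lambda,n}(t)$ and $\lambda-A$, such that $T^nK_{\lambda,n}(t)+D^n[D_{\lambda,n}(t)]^n=[\phi_\lambda(t)]^{n^2}I$. Applying this to our $y$ gives $[\phi_\lambda(t)]^{n^2}\,y=T^nK_{\lambda,n}(t)y+[D_{\lambda,n}(t)]^nD^ny$, where I have used that $D$ commutes with $D_{\lambda,n}(t)$ to reorder the last factor. The first summand lies in $R(T^n)$ by construction. For the second, $D^ny=T^nz$ and commutativity yield $[D_{\lambda,n}(t)]^nD^ny=[D_{\lambda,n}(t)]^nT^nz=T^n[D_{\lambda,n}(t)]^nz\in R(T^n)$. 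Hence $[\phi_\lambda(t)]^{n^2}y\in R(T^n)$, and provided $\phi_\lambda(t)\neq 0$ we may divide by the scalar $[\phi_\lambda(t)]^{n^2}$ to conclude $y\in R(T^n)$. This proves $\overline{R(T^n)}\subseteq R(T^n)$, i.e. $R(\lambda-A)^n$ is closed.

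The step I expect to be delicate is precisely the nonvanishing of the scalar $\phi_\lambda(t)=\int_0^t\int_0^\tau e^{-\lambda r}\frac{r^{\alpha-1}}{\Gamma(\alpha)}dr\,d\tau$: the argument collapses at any value where $\phi_\lambda(t)=0$, and in particular at $t=0$, where $B=0$ makes the hypothesis vacuous while the conclusion need not hold. For real $\lambda$ one has $\phi_\lambda(t)>0$ for every $t>0$, and for $\alpha>0$ and small $t>0$ one has $\phi_\lambda(t)\sim t^{\alpha+1}/\Gamma(\alpha+2)\neq 0$ for every $\lambda$, so the identity is always available at a suitable $t$; the cleanest reading therefore restricts attention to those $t$ for which $\phi_\lambda(t)\neq 0$, and this is the point at which I would take most care in matching the hypotheses actually used in the later spectral characterizations.
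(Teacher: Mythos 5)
Your argument is correct and uses the same toolbox as the paper --- the B\'ezout-type identities of Lemma~\ref{l2}, closedness of $R(B^n)$, and division by the scalar $\phi_\lambda(t)$ --- but the route through them is genuinely different, and cleaner. The paper works with identity (3), $(\lambda-A)^m[L_\lambda(t)]^m+D_{\lambda,m}(t)D_\lambda(t)=[\phi_\lambda(t)]^m I$: it applies it to an approximating sequence $y_k=(\lambda-A)^m x_k\to y$, rewrites $[\phi_\lambda(t)]^m y_k-(\lambda-A)^m[L_\lambda(t)]^m y_k$ as $B^m G_{\lambda,m}(t)x_k$, passes to the limit inside the closed range $R(B^m)$ (boundedness of $(\lambda-A)^m[L_\lambda(t)]^m=(\phi_\lambda(t)I-\varphi_\lambda(t)D_\lambda(t))^m$ is what makes the sequence converge), and only then solves for $y$; this passage in the paper is in fact garbled, since the displayed identity carries a single factor $D_\lambda(t)$ while the subsequent chain uses $[D_\lambda(t)]^m$. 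You instead take the limit \emph{first}: your Step 1, that $D^n$ carries $\overline{R(T^n)}$ into $\overline{R(B^n)}=R(B^n)\subseteq R(T^n)$, is where closedness enters, and you then reconstruct $y$ from identity (4), $T^nK_{\lambda,n}(t)+[D_\lambda(t)]^n[D_{\lambda,n}(t)]^n=[\phi_\lambda(t)]^{n^2}I$, at the harmless cost of the larger exponent $n^2$ in place of the paper's $n$. Either identity does the job, and your two-step split avoids the paper's muddled commutation step. Finally, your closing caveat does not merely flag a delicate point --- it corrects the paper. The paper's assertion ``for all $t\neq 0$ we have $\phi_\lambda(t)\neq 0$'' is false for complex $\lambda$: for $\alpha=1$ one computes $\phi_\lambda(t)=(\lambda t-1+e^{-\lambda t})/\lambda^2$, and the entire function $e^{-z}+z-1$ has infinitely many nonreal zeros $z_0$ (it cannot be of the form $z^2e^{az+b}$, by its linear growth on the positive real axis), so $\phi_{z_0}(1)=0$. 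Thus both proofs, yours and the paper's, establish the proposition only for pairs $(\lambda,t)$ with $t>0$ and $\phi_\lambda(t)\neq 0$; your observations that this always holds for real $\lambda$ and, at any fixed $\lambda$, for all small $t>0$ (since $\phi_\lambda(t)\sim t^{\alpha+1}/\Gamma(\alpha+2)$), together with your $t=0$ example ($B=0$ makes the hypothesis vacuous while the conclusion may fail), give the correct scope of the statement, which as printed (``for all $t\geq 0$'') is too strong.
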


\begin{proof}
Let $(y_n)_{n\in\mathds{N}}\subseteq X$ such that $y_n\rightarrow y\in X$ and there exists $(x_n)_{n\in\mathds{N}}\subseteq D(A)$ satisfying
      $$(\lambda-A)^mx_n=y_n.$$
      By Lemma \ref{l2}, we obtain
      $$(\lambda-A)^m[L_\lambda(t)]^my_n+G_{\lambda,m}(t)D_\lambda(t)y_n=[\phi_\lambda(t)]^my_n.$$
Hence, we have
\begin{eqnarray*}
[\int_0^te^{\lambda (t-s)}\frac{s^{\alpha-1}}{\Gamma(\alpha)}ds-S(t)]^mG_{\lambda,m}(t)x_n
&=& [D_\lambda(t)]^m(\lambda -A)^mG_{\lambda,m}(t)x_n;\\
&=& G_{\lambda,m}(t)[D_\lambda(t)]^m(\lambda -A)^mx_n;\\
&=& G_{\lambda,m}(t)[D_\lambda(t)]^my_n;\\
&=& [\phi_\lambda(t)]^my_n-(\lambda-A)^m[L_\lambda(t)]^my_n.
\end{eqnarray*}
Then, $$[\phi_\lambda(t)]^my_n-(\lambda-A)L_\lambda(t)y_n \in R[\int_0^te^{\lambda (t-s)}\frac{s^{\alpha-1}}{\Gamma(\alpha)}ds-S(t)]^m.$$
Since $R[\int_0^te^{\lambda (t-s)}\frac{s^{\alpha-1}}{\Gamma(\alpha)}ds-S(t)]^m$ is closed, hence $G_{\lambda,m}(t)$ is bounded linear and
$[\phi_\lambda(t)]^my_n-(\lambda-A)^m[L_\lambda(t)]^my_n$ converges to  $[\phi_\lambda(t)]^my-(\lambda-A)^m[L_\lambda(t)]^my$.\\
Therefore, we conclude that
$$[\phi_\lambda(t)]^my-(\lambda-A)^m[L_\lambda(t)]^my\in R[\int_0^te^{\lambda (t-s)}\frac{s^{\alpha-1}}{\Gamma(\alpha)}ds-S(t)]^m.$$
Then, there exists $z\in X$ such that
$$[\int_0^te^{\lambda (t-s)}\frac{s^{\alpha-1}}{\Gamma(\alpha)}ds-S(t)]^mz=
[\phi_\lambda(t)]^my-(\lambda-A)^m[L_\lambda(t)]^my.$$
Hence for all $t\neq 0$, we have $\phi_\lambda(t)\neq 0$ and
\begin{eqnarray*}
y &=& \frac{1}{[\phi_\lambda(t)]^m}[[\int_0^te^{\lambda (t-s)}\frac{s^{\alpha-1}}{\Gamma(\alpha)}ds-S(t)]^mz+(\lambda-A)^m[L_\lambda(t)]^my]\\
&=&\frac{1}{[\phi_\lambda(t)]^m}[(\lambda-A)^m[D_\lambda(t)]^mz+(\lambda-A)^m[L_\lambda(t)]^my]\\
&=& \frac{1}{[\phi_\lambda(t)]^m}(\lambda-A)^m[[D_\lambda(t)]^mz+[L_\lambda(t)]^my].
\end{eqnarray*}
Finally, we obtain $$y\in R(\lambda-A)^m.$$
\end{proof}

\begin{prop}\label{p2}Let $A$ be the generator of an $\alpha$-times integrated semigroup $(S(t))_{t\geq 0}$ with $\alpha>0$. Then for all $\lambda\in\mathds{C}$ and all $t\geq 0,$ we have
\begin{enumerate}
\item If $\int_0^te^{\lambda (t-s)}\frac{s^{\alpha-1}}{\Gamma(\alpha)}ds-S(t)$ is relatively regular, then $\lambda-A$ is also;
\item If $N[\int_0^te^{\lambda (t-s)}\frac{s^{\alpha-1}}{\Gamma(\alpha)}ds-S(t)]\subseteq R^\infty[\int_0^te^{\lambda (t-s)}\frac{s^{\alpha-1}}{\Gamma(\alpha)}ds-S(t)],$ then $$N(\lambda-A)\subseteq R^\infty(\lambda-A).$$
\item  If $N[\int_0^te^{\lambda (t-s)}\frac{s^{\alpha-1}}{\Gamma(\alpha)}ds-S(t)]\subseteq_e R^\infty[\int_0^te^{\lambda (t-s)}\frac{s^{\alpha-1}}{\Gamma(\alpha)}ds-S(t)],$ then $$N(\lambda-A)\subseteq_e R^\infty(\lambda-A).$$
\end{enumerate}
 \end{prop}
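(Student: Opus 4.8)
The plan is to transfer each of the three properties from the bounded operator
$B_\lambda(t):=\int_0^t e^{\lambda(t-s)}\frac{s^{\alpha-1}}{\Gamma(\alpha)}ds-S(t)$ to $\lambda-A$, using the factorisation supplied by Lemma \ref{l1}, namely $B_\lambda(t)=(\lambda-A)D_\lambda(t)=D_\lambda(t)(\lambda-A)$ (the second equality on $D(A)$), together with the Bezout-type identity of Lemma \ref{l2}(1), $(\lambda-A)L_\lambda(t)+\varphi_\lambda(t)D_\lambda(t)=\phi_\lambda(t)I$. I fix $t\neq 0$ so that the scalar $c:=\phi_\lambda(t)$ is nonzero, exactly as in Proposition \ref{p1}; I also write $e:=\varphi_\lambda(t)\neq0$ and abbreviate $T:=\lambda-A$, $B:=B_\lambda(t)$, $D:=D_\lambda(t)$, $L:=L_\lambda(t)$, so that $TD=DT=B$ and $TL=cI-eD$.

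For parts (2) and (3) I would first record two elementary inclusions. Since $B=DT$ on $D(A)$, every $x\in N(T)$ satisfies $Bx=DTx=0$, giving $N(\lambda-A)\subseteq N(B_\lambda(t))$. On the other hand, the commutativity of $D$ and $T$ (and $R(D)\subseteq D(A)$) from Lemma \ref{l1} yields $B^n=T^nD^n$, hence $R(B^n)\subseteq R(T^n)$ and therefore $R^\infty(B_\lambda(t))\subseteq R^\infty(\lambda-A)$. Granting these, part (2) is immediate: the hypothesis $N(B_\lambda(t))\subseteq R^\infty(B_\lambda(t))$ gives $N(\lambda-A)\subseteq N(B_\lambda(t))\subseteq R^\infty(B_\lambda(t))\subseteq R^\infty(\lambda-A)$. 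For part (3), if $N(B_\lambda(t))\subseteq R^\infty(B_\lambda(t))+F$ with $F$ finite-dimensional, the same chain gives $N(\lambda-A)\subseteq R^\infty(\lambda-A)+F$, i.e. $N(\lambda-A)\subseteq_e R^\infty(\lambda-A)$.

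The substantive part is (1), where I must exhibit an explicit generalised inverse. Assuming $BSB=B$ for some $S\in\mathcal{B}(X)$, I set
$$S_0:=\tfrac{1}{c}L+\tfrac{e}{c}\,DSD\in\mathcal{B}(X),$$
and claim $(\lambda-A)S_0(\lambda-A)=\lambda-A$ on $D(A)$. Indeed, for $x\in D(A)$ one has $TLTx=(cI-eD)Tx=cTx-eDTx=cTx-eBx$, while $T(DSD)Tx=(TD)\,S\,(DT)x=BSBx=Bx$, so that
$$TS_0Tx=\tfrac{1}{c}(cTx-eBx)+\tfrac{e}{c}Bx=Tx,$$
proving that $\lambda-A$ is relatively regular. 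The only real obstacle is discovering the right combination $S_0$: once the ansatz $S_0=\alpha L+\beta DSD$ is in place, matching the coefficients of $Tx$ and $Bx$ forces $\alpha=1/c$ and $\beta=e/c$, and boundedness of $S_0$ is automatic because $c\neq0$. I expect the only point demanding care to be the bookkeeping around the unbounded operator $\lambda-A$, namely checking that each grouping $TD$, $DT$, $TL$ is applied to vectors lying in $D(A)$ (or in $R(D)\subseteq D(A)$), which is exactly what Lemma \ref{l1} guarantees.
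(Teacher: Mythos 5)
Your proof is correct and takes essentially the same route as the paper: parts (2)--(3) use the identical chain $N(\lambda-A)\subseteq N(B_\lambda(t))\subseteq R^\infty(B_\lambda(t))\subseteq R^\infty(\lambda-A)$, and in part (1) your generalized inverse $S_0=\tfrac{1}{\phi_\lambda(t)}\big[L_\lambda(t)+\varphi_\lambda(t)D_\lambda(t)SD_\lambda(t)\big]$ is exactly the operator the paper extracts from the identity $\phi_\lambda(t)(\lambda-A)=(\lambda-A)\big[L_\lambda(t)+\varphi_\lambda(t)D_\lambda(t)SD_\lambda(t)\big](\lambda-A)$. Your explicit restriction to $t\neq 0$ (so that $\phi_\lambda(t)\neq 0$) makes precise a point the paper leaves implicit, but is otherwise the same argument.
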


\begin{proof}
\begin{enumerate}
\item Suppose that
$$[\int_0^te^{\lambda (t-s)}\frac{s^{\alpha-1}}{\Gamma(\alpha)}ds-S(t)]T(t)[\int_0^te^{\lambda (t-s)}\frac{s^{\alpha-1}}{\Gamma(\alpha)}ds-S(t)]=$$
$$[\int_0^te^{\lambda (t-s)}\frac{s^{\alpha-1}}{\Gamma(\alpha)}ds-S(t)].$$
Using Lemma \ref{l2}, we obtain
\begin{eqnarray*}
\phi_\lambda(t)(\lambda-A) &=& [(\lambda-A)L_\lambda(t)+G_\lambda(t)D_\lambda(t)](\lambda-A);\\
&=& (\lambda-A)L_\lambda(t)(\lambda-A)+\psi_\lambda(t)D_\lambda(t)(\lambda-A);\\
&=& (\lambda-A)L_\lambda(t)(\lambda-A)+\psi_\lambda(t)[\int_0^te^{\lambda (t-s)}\frac{s^{\alpha-1}}{\Gamma(\alpha)}ds-S(t)];\\
&=& (\lambda-A)L_\lambda(t)(\lambda-A)\\
 &+&\psi_\lambda(t)[[\int_0^te^{\lambda (t-s)}\frac{s^{\alpha-1}}{\Gamma(\alpha)}ds-S(t)]T(t)[\int_0^te^{\lambda (t-s)}\frac{s^{\alpha-1}}{\Gamma(\alpha)}ds-S(t)]];\\
&=& (\lambda-A)L_\lambda(t)(\lambda-A)+
\psi_\lambda(t)\big[[\lambda-A)D_\lambda(t)]T(t)[D_\lambda(t)(\lambda-A)]\big];\\
&=& (\lambda-A)[L_\lambda(t)+\psi_\lambda(t)D_\lambda(t)T(t)D_\lambda(t)](\lambda-A).
\end{eqnarray*}
Therefore, $\lambda-A$ is relatively regular.
 \item It is automatic by
 \begin{eqnarray*}
 N(\lambda-A) &\subseteq& N[\int_0^te^{\lambda (t-s)}\frac{s^{\alpha-1}}{\Gamma(\alpha)}ds-S(t)];\\
&\subseteq & R^\infty[\int_0^te^{\lambda (t-s)}\frac{s^{\alpha-1}}{\Gamma(\alpha)}ds-S(t)];\\
&\subseteq & R^\infty(\lambda-A).
\end{eqnarray*}
\item It is automatic by
 \begin{eqnarray*}
 N(\lambda-A) &\subseteq& N[\int_0^te^{\lambda (t-s)}\frac{s^{\alpha-1}}{\Gamma(\alpha)}ds-S(t)];\\
&\subseteq_e & R^\infty[\int_0^te^{\lambda (t-s)}\frac{s^{\alpha-1}}{\Gamma(\alpha)}ds-S(t)];\\
&\subseteq & R^\infty(\lambda-A).
\end{eqnarray*}
\end{enumerate}
 \end{proof}

The following result discusses the Kato and Saphar spectrum.
 \begin{thm} Let $A$ be the generator of an $\alpha$-times integrated semigroup $(S(t))_{t\geq 0}$ with $\alpha>0$. Then for all $t\geq 0$
\begin{enumerate}
\item $\int_0^t e^{(t-s)\sigma_{K}(A)}\frac{s^{\alpha-1}}{\Gamma(\alpha)}ds\subseteq \sigma_{K}(S(t));$
\item $\int_0^t e^{(t-s)\sigma_{S}(A)}\frac{s^{\alpha-1}}{\Gamma(\alpha)}ds\subseteq \sigma_{S}(S(t));$
\item $\int_0^t e^{(t-s)\sigma_{eK}(A)}\frac{s^{\alpha-1}}{\Gamma(\alpha)}ds\subseteq \sigma_{eK}(S(t));$
\item $\int_0^t e^{(t-s)\sigma_{eS}(A)}\frac{s^{\alpha-1}}{\Gamma(\alpha)}ds\subseteq \sigma_{eS}(S(t)).$
\end{enumerate}
\end{thm}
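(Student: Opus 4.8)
The plan is to prove all four inclusions simultaneously by contraposition, reading off the required properties of $\lambda-A$ from the properties of a single scalar-shifted operator via Propositions \ref{p1} and \ref{p2}. The starting observation is that, for fixed $\lambda\in\mathds{C}$ and $t\geq 0$, the quantity $\Phi_\lambda(t):=\int_0^t e^{\lambda(t-s)}\frac{s^{\alpha-1}}{\Gamma(\alpha)}ds$ is a \emph{scalar}, so the operator occurring throughout Lemma \ref{l1} and the two propositions,
$$\int_0^t e^{\lambda(t-s)}\frac{s^{\alpha-1}}{\Gamma(\alpha)}ds-S(t)=\Phi_\lambda(t)I-S(t)=\Phi_\lambda(t)-S(t),$$
is exactly the pencil $\Phi_\lambda(t)-S(t)$ whose behaviour governs membership of the point $\Phi_\lambda(t)$ in the various spectra of $S(t)$. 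As $\lambda$ ranges over a spectrum $\sigma_\bullet(A)$, the numbers $\Phi_\lambda(t)$ are precisely the elements of the set $\int_0^t e^{(t-s)\sigma_\bullet(A)}\frac{s^{\alpha-1}}{\Gamma(\alpha)}ds$ appearing on each left-hand side. Thus each inclusion is equivalent to the implication ``$\lambda\in\sigma_\bullet(A)\Rightarrow\Phi_\lambda(t)-S(t)$ fails to lie in the corresponding operator class,'' and I will prove the contrapositive.

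Fix $t>0$ (so that $\phi_\lambda(t)\neq 0$, as Proposition \ref{p1} requires) and $\lambda\in\mathds{C}$. For (1) I assume $\Phi_\lambda(t)-S(t)\in\mathcal{D}(X)$, i.e. $R[\Phi_\lambda(t)-S(t)]$ is closed and $N[\Phi_\lambda(t)-S(t)]\subseteq R^\infty[\Phi_\lambda(t)-S(t)]$. Proposition \ref{p1} with $n=1$ then yields that $R(\lambda-A)$ is closed, and Proposition \ref{p2}(2) yields $N(\lambda-A)\subseteq R^\infty(\lambda-A)$; hence $\lambda-A\in\mathcal{D}(X)$, that is $\lambda\notin\sigma_K(A)$, which is the contrapositive of (1). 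Part (3) is identical except that one starts from $\Phi_\lambda(t)-S(t)\in e\mathcal{D}(X)$ and invokes Proposition \ref{p2}(3) to obtain $N(\lambda-A)\subseteq_e R^\infty(\lambda-A)$, giving $\lambda-A\in e\mathcal{D}(X)$.

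For the Saphar cases I replace the closed-range input by relative regularity. Assuming $\Phi_\lambda(t)-S(t)\in\mathcal{S}(X)$ in (2), Proposition \ref{p2}(1) gives that $\lambda-A$ is relatively regular and Proposition \ref{p2}(2) gives $N(\lambda-A)\subseteq R^\infty(\lambda-A)$, so $\lambda-A\in\mathcal{S}(X)$; part (4) is the same with Proposition \ref{p2}(3) in place of Proposition \ref{p2}(2), yielding $\lambda-A\in e\mathcal{S}(X)$. Since all the analytic content—closedness of ranges, relative regularity, and the kernel/hyper-range inclusions—has already been transferred from $\Phi_\lambda(t)-S(t)$ to $\lambda-A$ in Propositions \ref{p1} and \ref{p2}, the work here is pure assembly; the only points needing care are the correct reading of the set-valued exponential-integral notation and the verification that each of the four classes factors exactly into the pieces the two propositions supply. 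The one genuinely separate case is $t=0$, where $S(0)=0$ and $\Phi_\lambda(0)=0$ so Proposition \ref{p1} is unavailable (its argument divides by $[\phi_\lambda(t)]^m$); there the left-hand set collapses to $\{0\}$ and the inclusion must be checked directly, which I expect to be the only mild obstacle.
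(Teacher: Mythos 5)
Your proposal is correct and follows essentially the same route as the paper: contraposition, with Proposition \ref{p1} supplying closedness of $R(\lambda-A)$ and Proposition \ref{p2} supplying relative regularity and the kernel/hyper-range inclusions, assembled into the four operator classes. Your two refinements --- that the Saphar cases (2) and (4) need only Proposition \ref{p2}, and that $t=0$ requires a separate direct check because $\phi_\lambda(0)=0$ makes Proposition \ref{p1} unavailable --- are points the paper's own proof glosses over, and the $t=0$ case indeed closes easily since the zero operator $S(0)$ fails all four properties on a nontrivial space.
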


\begin{proof}
\begin{enumerate}
  \item Suppose that $\int_0^t e^{\lambda(t-s)}\frac{s^{\alpha-1}}{\Gamma(\alpha)}ds\notin\sigma_K(S(t))$, then we have\\
$R[\int_0^t e^{\lambda(t-s)}\frac{s^{\alpha-1}}{\Gamma(\alpha)}ds-S(t)]$ is closed and
$$N[\int_0^t e^{\lambda(t-s)}\frac{s^{\alpha-1}}{\Gamma(\alpha)}ds-S(t)]\subseteq R^\infty[\int_0^t e^{\lambda(t-s)}\frac{s^{\alpha-1}}{\Gamma(\alpha)}ds-S(t)].$$
Thus by Propositions \ref{p1} and \ref{p2}, we obtain
$R(\lambda-A)$ is closed and $$N(\lambda-A)\subseteq R^\infty(\lambda-A).$$
Therefore $\lambda-A$ is Kato and hence $$\lambda\notin\sigma_K(A).$$
\item Suppose that $\int_0^t e^{\lambda(t-s)}\frac{s^{\alpha-1}}{\Gamma(\alpha)}ds\notin\sigma_S(S(t))$, then we have\\
 $\int_0^t e^{\lambda(t-s)}\frac{s^{\alpha-1}}{\Gamma(\alpha)}ds-S(t)$ is relatively regular and $$N[\int_0^t e^{\lambda(t-s)}\frac{s^{\alpha-1}}{\Gamma(\alpha)}ds-S(t)]\subseteq R^\infty[\int_0^t e^{\lambda(t-s)}\frac{s^{\alpha-1}}{\Gamma(\alpha)}ds-S(t)].$$
Thus by Propositions \ref{p1} and \ref{p2}, we obtain
$\lambda-A$ is relatively regular and $$N(\lambda-A)\subseteq R^\infty(\lambda-A).$$
Therefore $\lambda-A$ is Saphar and hence $$\lambda\notin\sigma_S(A).$$
  \item Suppose that $\int_0^t e^{\lambda(t-s)}\frac{s^{\alpha-1}}{\Gamma(\alpha)}ds\notin\sigma_{eK}(S(t))$, then we have\\
$R[\int_0^t e^{\lambda(t-s)}\frac{s^{\alpha-1}}{\Gamma(\alpha)}ds-S(t)]$ is closed and
$$N[\int_0^t e^{\lambda(t-s)}\frac{s^{\alpha-1}}{\Gamma(\alpha)}ds-S(t)]\subseteq_e R^\infty[\int_0^t e^{\lambda(t-s)}\frac{s^{\alpha-1}}{\Gamma(\alpha)}ds-S(t)].$$
Thus by Propositions \ref{p1} and \ref{p2}, we obtain
$R(\lambda-A)$ is closed and $$N(\lambda-A)\subseteq_e R^\infty(\lambda-A).$$
Therefore $\lambda-A$ is essentially Kato and hence $$\lambda\notin\sigma_{eK}(A).$$
\item Suppose that $\int_0^t e^{\lambda(t-s)}\frac{s^{\alpha-1}}{\Gamma(\alpha)}ds\notin\sigma_{eS}(S(t))$, then we have\\
 $\int_0^t e^{\lambda(t-s)}\frac{s^{\alpha-1}}{\Gamma(\alpha)}ds-S(t)$ is relatively regular and $$N[\int_0^t e^{\lambda(t-s)}\frac{s^{\alpha-1}}{\Gamma(\alpha)}ds-S(t)]\subseteq_e R^\infty[\int_0^t e^{\lambda(t-s)}\frac{s^{\alpha-1}}{\Gamma(\alpha)}ds-S(t)].$$
Thus by Propositions \ref{p1} and \ref{p2}, we obtain
$\lambda-A$ is relatively regular and $$N(\lambda-A)\subseteq_e R^\infty(\lambda-A).$$
Therefore $\lambda-A$ is essentially Saphar and hence $$\lambda\notin\sigma_{eS}(A).$$
\end{enumerate}
\end{proof}

\begin{prop}\label{p4} Let $A$ be the generator of an $\alpha$-times integrated semigroup $(S(t))_{t\geq 0}$ with $\alpha>0$. Then for all $\lambda\in\mathds{C}$ and all $t\geq 0$, we have\\
$dis[\int_0^te^{\lambda (t-s)}\frac{s^{\alpha-1}}{\Gamma(\alpha)}ds-S(t)]=n,$ then $dis(A-\lambda)\leq n.$
\end{prop}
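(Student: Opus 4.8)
The plan is to set $B(t):=\int_0^te^{\lambda (t-s)}\frac{s^{\alpha-1}}{\Gamma(\alpha)}ds-S(t)$ and compare the kernel--range structure of $B(t)$ with that of $\lambda-A$, then transport the stable-iteration equality granted by the hypothesis $dis(B(t))=n$. Since $dis(A-\lambda)=dis(\lambda-A)$ (kernel, range and all powers are unchanged under multiplication by $-1$), it suffices to bound $dis(\lambda-A)$. Recall that $dis(T)\le n$ means precisely $R(T^n)\cap N(T)=R(T^m)\cap N(T)$ for every $m\ge n$; since $R(T^m)\subseteq R(T^n)$ when $m\ge n$, only the inclusion $R(T^n)\cap N(T)\subseteq R(T^m)\cap N(T)$ needs proof. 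Throughout I take $t>0$, so that $\phi_\lambda(t)\neq 0$ (following the convention of Proposition \ref{p1}); the case $t=0$ gives $B(0)=0$ and is trivial.

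First I would record two easy facts. From Lemma \ref{l1}, $(\lambda-A)D_\lambda(t)=B(t)=D_\lambda(t)(\lambda-A)$, and $D_\lambda(t)$ commutes with $\lambda-A$; hence $B(t)^m=(\lambda-A)^mD_\lambda(t)^m=D_\lambda(t)^m(\lambda-A)^m$ for every $m$. Consequently $N(\lambda-A)\subseteq N(B(t))$ (apply $D_\lambda(t)$ to $(\lambda-A)x=0$) and $R(B(t)^m)\subseteq R((\lambda-A)^m)$ (factor $(\lambda-A)^m$ off on the left).

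The crucial step is the reverse containment on $N(\lambda-A)$. Fix $m$ and let $x\in R((\lambda-A)^m)\cap N(\lambda-A)$, say $x=(\lambda-A)^mu$. Applying the identity of Lemma \ref{l2}(2), namely $(\lambda-A)L_{\lambda,m}(t)+[\varphi_\lambda(t)]^m[D_\lambda(t)]^m=[\phi_\lambda(t)]^mI$, to $x$, and using that $L_{\lambda,m}(t)$ commutes with $\lambda-A$ together with $(\lambda-A)x=0$, the first summand vanishes and I obtain $x=[\phi_\lambda(t)]^{-m}[\varphi_\lambda(t)]^m D_\lambda(t)^m x$. Substituting $x=(\lambda-A)^mu$ and using $D_\lambda(t)^m(\lambda-A)^m=B(t)^m$ gives $x=[\phi_\lambda(t)]^{-m}[\varphi_\lambda(t)]^mB(t)^mu\in R(B(t)^m)$. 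Combined with the easy inclusion this yields
$$R((\lambda-A)^m)\cap N(\lambda-A)=R(B(t)^m)\cap N(\lambda-A)\qquad\text{for every }m.$$

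Finally I would transfer the hypothesis. Let $m\ge n$ and take $x\in R((\lambda-A)^n)\cap N(\lambda-A)$. By the displayed identity with exponent $n$, $x\in R(B(t)^n)$, and since $x\in N(\lambda-A)\subseteq N(B(t))$ we get $x\in R(B(t)^n)\cap N(B(t))$. The assumption $dis(B(t))=n$ gives $R(B(t)^n)\cap N(B(t))=R(B(t)^m)\cap N(B(t))$, so $x\in R(B(t)^m)\subseteq R((\lambda-A)^m)$; as $x\in N(\lambda-A)$ this places $x$ in $R((\lambda-A)^m)\cap N(\lambda-A)$. Hence $R((\lambda-A)^n)\cap N(\lambda-A)=R((\lambda-A)^m)\cap N(\lambda-A)$ for all $m\ge n$, which is exactly $dis(\lambda-A)\le n$, i.e. $dis(A-\lambda)\le n$. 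The main obstacle is precisely the middle step, the inclusion $R((\lambda-A)^m)\cap N(\lambda-A)\subseteq R(B(t)^m)$: this is where the Bezout-type identity of Lemma \ref{l2}(2) and the commutation of $L_{\lambda,m}(t)$ with $\lambda-A$ are indispensable, since together they make $[\phi_\lambda(t)]^{-m}[\varphi_\lambda(t)]^m D_\lambda(t)^m$ act as the identity on $N(\lambda-A)$; a secondary nuisance is the careful handling of the domains of the unbounded powers $(\lambda-A)^m$.
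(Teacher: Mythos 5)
Your argument is correct, and it runs on exactly the machinery of the paper's own proof: the identity $(\lambda-A)L_{\lambda,n}(t)+[\varphi_\lambda(t)]^n[D_\lambda(t)]^n=[\phi_\lambda(t)]^nI$ of Lemma \ref{l2}, the commutation of $L_{\lambda,n}(t)$ with $\lambda-A$ to annihilate the first summand on $N(\lambda-A)$, the factorization $[D_\lambda(t)]^m(\lambda-A)^m=\big[\int_0^te^{\lambda(t-s)}\frac{s^{\alpha-1}}{\Gamma(\alpha)}ds-S(t)\big]^m$ from Lemma \ref{l1}, and division by $[\phi_\lambda(t)]^n\neq 0$. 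The one genuine difference is in your favour: you prove the nontrivial inclusion $R((\lambda-A)^n)\cap N(\lambda-A)\subseteq R((\lambda-A)^m)\cap N(\lambda-A)$ for $m\geq n$, which is precisely what $dis(\lambda-A)\leq n$ demands, whereas the paper's proof starts from $y\in R((\lambda-A)^m)\cap N(\lambda-A)$ with $m\geq n$ and ends at $y\in R((\lambda-A)^n)$ --- an inclusion that holds trivially because $R((\lambda-A)^m)\subseteq R((\lambda-A)^n)$ --- so as literally written the paper's closing step ``therefore $dis(\lambda-A)\leq n$'' does not follow. Your version, with the roles of $m$ and $n$ in the correct order, is the repaired form of the same argument, and your observation that only one inclusion needs proof is exactly the point the paper's writeup blurs.

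One caveat: your dismissal of $t=0$ as ``trivial'' is not right. There $S(0)=0$ and the integral vanishes, so the operator in question is $0$, whose degree of stable iteration is $1$; the hypothesis then reads $n=1$, and $dis(\lambda-A)\leq 1$ certainly does not follow. In other words, the proposition as stated ``for all $t\geq 0$'' is defective at $t=0$; since the paper's own proof also tacitly divides by $\phi_\lambda(t)$, restricting to $t>0$ as you do is the only reading under which the statement is provable, but you should say the $t=0$ case fails (or must be excluded) rather than that it is trivial.
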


\begin{proof}
Since $dis[\int_0^te^{\lambda (t-s)}\frac{s^{\alpha-1}}{\Gamma(\alpha)}ds-S(t)]=n$, then for all $m\geq n$, we have\\
    $$R[\int_0^te^{\lambda (t-s)}\frac{s^{\alpha-1}}{\Gamma(\alpha)}ds-S(t)]^m\cap N[\int_0^te^{\lambda (t-s)}\frac{s^{\alpha-1}}{\Gamma(\alpha)}ds-S(t)]=$$
    $$\quad\quad\quad\quad R[\int_0^te^{\lambda (t-s)}\frac{s^{\alpha-1}}{\Gamma(\alpha)}ds-S(t)]^n\cap N[\int_0^te^{\lambda (t-s)}\frac{s^{\alpha-1}}{\Gamma(\alpha)}ds-S(t)].$$
    Let $m\geq n$ and $y\in R(\lambda-A)^m\cap N(\lambda-A)$, then there exists $x\in X$ such that  $$y=(\lambda-A)^mx.$$
    Using Lemma \ref{l2} and since $y\in N(\lambda-A)$, we obtain
    \begin{eqnarray*}
    [\phi_\lambda(t)]^my &=& [\phi_\lambda(t)]^my\\
            &=&(\lambda-A)L_{\lambda,m}(t)y+
    [\varphi_\lambda(t)]^m[D_\lambda(t)]^my\\
    &=&L_{\lambda,m}(t)(\lambda-A)y+
    [\varphi_\lambda(t)]^m[D_\lambda(t)]^m(\lambda-A)^mx\\
    &=& [\varphi_\lambda(t)]^m[D_\lambda(t)(\lambda-A)]^mx\\
    &=& [\varphi_\lambda(t)]^m[\int_0^te^{\lambda (t-s)}\frac{s^{\alpha-1}}{\Gamma(\alpha)}ds-S(t)]^mx\\
     &=& [\int_0^te^{\lambda (t-s)}\frac{s^{\alpha-1}}{\Gamma(\alpha)}ds-S(t)]^m[\varphi_\lambda(t)]^mx.
    \end{eqnarray*}
    Then $$y\in R[\int_0^te^{\lambda (t-s)}\frac{s^{\alpha-1}}{\Gamma(\alpha)}ds-S(t)]^m.$$
    Hence, since $y\in N(\lambda-A)\subseteq N[\int_0^te^{\lambda (t-s)}\frac{s^{\alpha-1}}{\Gamma(\alpha)}ds-S(t)],$ then
    $$y\in R[\int_0^te^{\lambda (t-s)}\frac{s^{\alpha-1}}{\Gamma(\alpha)}ds-S(t)]^m\cap N[\int_0^te^{\lambda (t-s)}\frac{s^{\alpha-1}}{\Gamma(\alpha)}ds-S(t)].$$
    Therefore $$y\in R[\int_0^te^{\lambda (t-s)}\frac{s^{\alpha-1}}{\Gamma(\alpha)}ds-S(t)]^n\cap N[\int_0^te^{\lambda (t-s)}\frac{s^{\alpha-1}}{\Gamma(\alpha)}ds-S(t)].$$
    Then there exists $z\in X$ satisfying
   $$ y=[\int_0^te^{\lambda (t-s)}\frac{s^{\alpha-1}}{\Gamma(\alpha)}ds-S(t)]^nz=(\lambda-A)^n[D_\lambda(t)]^nz.$$
    So $y\in R(\lambda-A)^n,$ and therefore $$dis(\lambda-A)\leq n.$$
\end{proof}

\begin{prop}\label{p5} Let $A$ be the generator of an $\alpha$-times integrated semigroup $(S(t))_{t\geq 0}$ with $\alpha>0$. For all $\lambda\in\mathds{C}$ and all $t\geq 0$, if $R[\int_0^te^{\lambda (t-s)}\frac{s^{\alpha-1}}{\Gamma(\alpha)}ds-S(t)]+N[\int_0^te^{\lambda (t-s)}\frac{s^{\alpha-1}}{\Gamma(\alpha)}ds-S(t)]^n$ is closed, then $R(\lambda-A)+N(\lambda-A)^n$ is also.
\end{prop}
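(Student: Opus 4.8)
The plan is to reproduce the mechanism of Proposition \ref{p1}, but now keeping track of kernels as well as ranges. Throughout, abbreviate $B=\int_0^t e^{\lambda(t-s)}\frac{s^{\alpha-1}}{\Gamma(\alpha)}ds-S(t)$, which is bounded since the integral is a scalar and $S(t)\in\mathcal{B}(X)$; set $W=R(\lambda-A)+N(\lambda-A)^n$ and $V=R(B)+N(B^n)$, so that $V$ is closed by hypothesis. By Lemma \ref{l1} we have $B=(\lambda-A)D_\lambda(t)$ on $X$ and $B=D_\lambda(t)(\lambda-A)$ on $D(A)$; as in Proposition \ref{p1} the commutation of $D_\lambda(t)$ with $\lambda-A$ yields $B^n=(\lambda-A)^n[D_\lambda(t)]^n=[D_\lambda(t)]^n(\lambda-A)^n$, and in particular $B$ commutes with $D_\lambda(t)$. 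I assume $t\neq 0$, so that $\phi_\lambda(t)\neq 0$, exactly as in Proposition \ref{p1}.

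The core consists of two inclusions. First I would show $V\subseteq W$. Since $R(B)=R\big((\lambda-A)D_\lambda(t)\big)\subseteq R(\lambda-A)\subseteq W$, it remains to handle $N(B^n)$: if $d\in N(B^n)$ then $(\lambda-A)^n[D_\lambda(t)]^nd=B^nd=0$, so $[D_\lambda(t)]^nd\in N(\lambda-A)^n$, and substituting $d$ into the identity of Lemma \ref{l2}(2) gives
$$[\phi_\lambda(t)]^n d=(\lambda-A)L_{\lambda,n}(t)d+[\varphi_\lambda(t)]^n[D_\lambda(t)]^nd\in R(\lambda-A)+N(\lambda-A)^n=W,$$
whence $d\in W$ after dividing by the nonzero scalar $[\phi_\lambda(t)]^n$. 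Second, I would establish $D_\lambda(t)W\subseteq V$: for $w=(\lambda-A)a+b$ with $a\in D(A)$ and $b\in N(\lambda-A)^n$, one has $D_\lambda(t)w=Ba+D_\lambda(t)b$ with $Ba\in R(B)$, while $b\in N(\lambda-A)^n\subseteq N(B^n)$ and $D_\lambda(t)$ commutes with $B^n$, so $D_\lambda(t)b\in N(B^n)$; hence $D_\lambda(t)w\in V$.

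With these two facts the closedness of $W$ follows quickly. Given $z_k\in W$ with $z_k\to z$, the inclusion $D_\lambda(t)W\subseteq V$ gives $D_\lambda(t)z_k\in V$, and since $D_\lambda(t)$ is bounded and $V$ is closed, the limit satisfies $D_\lambda(t)z\in V\subseteq W$. Feeding $z$ into Lemma \ref{l2}(1) then yields
$$\phi_\lambda(t)z=(\lambda-A)L_\lambda(t)z+\varphi_\lambda(t)D_\lambda(t)z,$$
where the first summand lies in $R(\lambda-A)\subseteq W$ and the second in $W$; dividing by $\phi_\lambda(t)\neq 0$ gives $z\in W$, proving $W$ closed. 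The step I expect to be most delicate is the inclusion $N(B^n)\subseteq W$, which is the only place where the closed-range hypothesis alone is not enough: it forces one to resolve the kernel of $B^n$ through the auxiliary operator $L_{\lambda,n}(t)$ and the nonvanishing of $\phi_\lambda(t)$, and it requires checking that the vectors $[D_\lambda(t)]^n d$ actually lie in $D(A^n)$ so that the factorization $B^n=(\lambda-A)^n[D_\lambda(t)]^n$ is meaningful.
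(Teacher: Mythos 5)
Your proof is correct, and while it runs on the same fuel as the paper's --- the factorizations $B=(\lambda-A)D_\lambda(t)$ on $X$ and $B=D_\lambda(t)(\lambda-A)$ on $D(A)$ from Lemma \ref{l1}, together with the $\phi_\lambda(t)$-identities of Lemma \ref{l2} --- its architecture is genuinely different. The paper argues directly on the approximating sequence: it applies $[D_\lambda(t)]^m$ to $y_k=(\lambda-A)x_k+z_k$, observes that $[D_\lambda(t)]^m y_k\in R(B)+N(B^m)$, passes to the limit inside that closed subspace, and then reconstructs $y$ through the level-$2m$ identity $(\lambda-A)L_{\lambda,2m}(t)+[\varphi_\lambda(t)]^{2m}[D_\lambda(t)]^{2m}=[\phi_\lambda(t)]^{2m}I$, at the cost of bookkeeping with powers up to $2m$ (a computation whose published version contains slips, such as a spurious factor $[\phi_\lambda(t)]^{2m}$ in the reconstruction line). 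You instead extract two standalone subspace inclusions, $V=R(B)+N(B^n)\subseteq W=R(\lambda-A)+N(\lambda-A)^n$ (via the level-$n$ identity of Lemma \ref{l2}) and $D_\lambda(t)W\subseteq V$, after which the limiting step is three lines and needs only the first-order identity $(\lambda-A)L_\lambda(t)+\varphi_\lambda(t)D_\lambda(t)=\phi_\lambda(t)I$; the sandwich $D_\lambda(t)W\subseteq V\subseteq W$ makes transparent exactly where the closedness of $V$ enters and dispenses with the $2m$-level manipulation altogether. You are also more careful than the paper on two points it leaves implicit: that $[D_\lambda(t)]^n$ maps $X$ into $D(A^n)$, so that the factorization $B^n=(\lambda-A)^n[D_\lambda(t)]^n$ is legitimate, and that $t\neq 0$ must be assumed (at $t=0$ one has $B=0$ for $\alpha>0$, so the hypothesis holds vacuously while the division by $\phi_\lambda(0)=0$, which the paper performs silently, is impossible). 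The one caveat you inherit rather than repair is the assertion that $\phi_\lambda(t)\neq 0$ for all $t>0$ and all complex $\lambda$: this entire function of $\lambda$ can in fact vanish (for $\alpha=1$ one computes $\phi_\lambda(t)=[\lambda t-1+e^{-\lambda t}]/\lambda^2$, which is zero exactly when $e^{-\lambda t}=1-\lambda t$, an equation with nonzero complex solutions), so your proof, like the paper's own Propositions \ref{p1} and \ref{p5}, is complete only for those pairs $(\lambda,t)$ with $\phi_\lambda(t)\neq 0$; on this point your argument is exactly as rigorous as the original.
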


\begin{proof}
Let $(y_n)_{n\in\mathds{N}}\subseteq X$ such that $y_n\rightarrow y\in X$ and there exist $(x_n)_{n\in\mathds{N}}\subseteq D(A)$ and $(z_n)_{n\in\mathds{N}}\subseteq N(\lambda-A)^m$ satisfying
      $$y_n=(\lambda-A)x_n+z_n.$$
      By Lemma \ref{l2}, we obtain
\begin{eqnarray*}
[D_\lambda(t)]^my_n &=& [D_\lambda(t)]^m(\lambda-A)x_n+[D_\lambda(t)]^mz_n;\\
         &=& [\int_0^te^{\lambda (t-s)}\frac{s^{\alpha-1}}{\Gamma(\alpha)}ds-S(t)][D_\lambda(t)]^{m-1}x_n+
         [D_\lambda(t)]^mz_n;\\
\end{eqnarray*}
Since $$[\int_0^te^{\lambda (t-s)}\frac{s^{\alpha-1}}{\Gamma(\alpha)}ds-S(t)]^m[D_\lambda(t)]^mz_n=
[D_\lambda(t)]^m[D_\lambda(t)]^m(\lambda-A)^mz_n=0,$$
we conclude that $$[D_\lambda(t)]^my_n\in R[\int_0^te^{\lambda (t-s)}\frac{s^{\alpha-1}}{\Gamma(\alpha)}ds-S(t)]+N[\int_0^te^{\lambda (t-s)}\frac{s^{\alpha-1}}{\Gamma(\alpha)}ds-S(t)]^m.$$
Moreover, we have $$R[\int_0^te^{\lambda (t-s)}\frac{s^{\alpha-1}}{\Gamma(\alpha)}ds-S(t)]+N[\int_0^te^{\lambda (t-s)}\frac{s^{\alpha-1}}{\Gamma(\alpha)}ds-S(t)]^m$$ is closed and $[D_\lambda(t)]^my_n$ converges to $[D_\lambda(t)]^my$, then there exist
$x\in X$ and $z\in N[\int_0^te^{\lambda (t-s)}\frac{s^{\alpha-1}}{\Gamma(\alpha)}ds-S(t)]^m$ such that $$[D_\lambda(t)]^my=[\int_0^te^{\lambda (t-s)}\frac{s^{\alpha-1}}{\Gamma(\alpha)}ds-S(t)]x+z.$$
Hence, we have
\begin{eqnarray*}
[D_\lambda(t)]^{2m}y &=& [D_\lambda(t)]^m[D_\lambda(t)]^my;\\
&=& [D_\lambda(t)]^m[\int_0^te^{\lambda (t-s)}\frac{s^{\alpha-1}}{\Gamma(\alpha)}ds-S(t)]x+[D_\lambda(t)]^mz,
\end{eqnarray*}
Therefore, using Lemma \ref{l2}, we obtain
\begin{eqnarray*}
[\phi_\lambda(t)]^{2m}y &=& (\lambda-A)L_{\lambda,2m}(t)y+
[\phi_\lambda(t)]^{2m}[\varphi_{\lambda}(t)]^{2m}[D_{\lambda}(t)]^{2m}y;\\
 &=& (\lambda-A)L_{\lambda,2m}(t)y+
 [\varphi_{\lambda}(t)]^{2m}[[D_\lambda(t)]^m[\int_0^te^{\lambda (t-s)}\frac{s^{\alpha-1}}{\Gamma(\alpha)}ds-S(t)]x+[D_\lambda(t)]^mz];\\
 &=& (\lambda-A)L_{\lambda,2m}(t)y+[\varphi_{\lambda}(t)]^{2m}
 [[D_\lambda(t)]^mD_\lambda(t)(\lambda-A)x+[D_\lambda(t)]^mz];\\
 &=& (\lambda-A)[L_{\lambda,2m}(t)y+
 [\varphi_{\lambda}(t)]^{2m}
 [D_\lambda(t)]^{m+1}x]+\varphi_{\lambda}(t)]^{2m}[D_\lambda(t)]^mz.
\end{eqnarray*}
Since  $$(\lambda-A)^m[\varphi_{\lambda}(t)^{2m}[D_\lambda(t)]^mz]=\varphi_{\lambda}(t)^{2m}[\int_0^te^{\lambda (t-s)}\frac{s^{\alpha-1}}{\Gamma(\alpha)}ds-S(t)]^mz=0,$$
we deduce that $$y\in R(\lambda-A)+N(\lambda-A)^m.$$
\end{proof}

The following theorem examines the quasi-Fredholm spectrum.
\begin{thm} Let $A$ be the generator of an $\alpha$-times integrated semigroup $(S(t))_{t\geq 0}$ with $\alpha>0$. Then for all $t\geq 0$, we have
 $$\int_0^t e^{(t-s)\sigma_{qe}(A)}\frac{s^{\alpha-1}}{\Gamma(\alpha)}ds\subseteq \sigma_{qe}(S(t)).$$
\end{thm}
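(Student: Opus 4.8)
The plan is to argue by contraposition over the scalar $\mu=\int_0^t e^{\lambda (t-s)}\frac{s^{\alpha-1}}{\Gamma(\alpha)}ds$ attached to each $\lambda\in\mathds{C}$. Fixing $t\geq0$ and writing $E=\mu-S(t)=\int_0^t e^{\lambda (t-s)}\frac{s^{\alpha-1}}{\Gamma(\alpha)}ds-S(t)$, the asserted inclusion is equivalent to the implication: if $\mu\notin\sigma_{qe}(S(t))$ then $\lambda\notin\sigma_{qe}(A)$. So first I would assume $\mu\notin\sigma_{qe}(S(t))$, which by definition means $E\in q\Phi(X)$; hence there is $d\in\mathds{N}$ with $dis(E)=d$ such that $R(E^n)$ and $R(E)+N(E^n)$ are closed for every $n\geq d$.

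Next I would simply funnel these three families of data through the propositions already proved. For each exponent $n\geq d$, Proposition \ref{p1} turns the closedness of $R(E^n)$ into the closedness of $R((\lambda-A)^n)$, and Proposition \ref{p5} turns the closedness of $R(E)+N(E^n)$ into the closedness of $R(\lambda-A)+N((\lambda-A)^n)$. At the same time Proposition \ref{p4} converts $dis(E)=d$ into $dis(\lambda-A)\leq d$, so in particular $d':=dis(\lambda-A)$ is finite. At this stage both closedness conditions that enter the definition of $q\Phi(X)$ are available for $\lambda-A$ at every level $n\geq d$, while its own stabilization degree $d'$ satisfies $d'\leq d$.

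The hard part is the reconciliation of the two indices, and it is the only nontrivial step. The definition of $q\Phi(X)$ requires the closedness conditions to hold for all $n\geq dis(\lambda-A)=d'$, whereas the transfer above supplies them only for $n\geq d$, and a priori $d'<d$ can occur, leaving the band $d'\leq n<d$ uncovered. I would close this gap with the standard stability property of the stable iteration range: once $n\geq dis(T)$ the subspaces $N(T)\cap R(T^n)$ have already stabilized, and for such operators the closedness of $R(T^n)$, as well as that of $R(T)+N(T^n)$, is independent of the particular $n\geq dis(T)$, so closedness at one single level $n_0\geq dis(T)$ forces it at every level $\geq dis(T)$. Applying this to $T=\lambda-A$ with the witnessing level $n_0=d\geq d'$ propagates both conditions down to all $n\geq d'$. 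Therefore $\lambda-A$ satisfies the definition of $q\Phi(X)$ with witness $d'$, i.e. $\lambda\notin\sigma_{qe}(A)$, and ranging over $\lambda\in\sigma_{qe}(A)$ yields the claimed inclusion $\int_0^t e^{(t-s)\sigma_{qe}(A)}\frac{s^{\alpha-1}}{\Gamma(\alpha)}ds\subseteq\sigma_{qe}(S(t))$.
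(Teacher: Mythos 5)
Your proposal follows exactly the paper's route: argue by contraposition, set $E=\int_0^te^{\lambda(t-s)}\frac{s^{\alpha-1}}{\Gamma(\alpha)}ds-S(t)$, and funnel the three pieces of quasi-Fredholm data for $E$ through Propositions \ref{p1}, \ref{p5} and \ref{p4} to get, for all $n\geq d$, closedness of $R((\lambda-A)^n)$ and of $R(\lambda-A)+N((\lambda-A)^n)$, together with $dis(\lambda-A)\leq d$. Where you differ is instructive: the paper stops there and simply declares $\lambda-A$ quasi-Fredholm, whereas under its own definition of $q\Phi(X)$ (which ties the closedness threshold to $d=dis(T)$) this is exactly the gap you identify, since $d':=dis(\lambda-A)<d$ leaves the band $d'\leq n<d$ unverified. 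Your repair is sound, and in fact half of it is purely algebraic: the standard identity $\dim\big[(R(T)+N(T^{n+1}))/(R(T)+N(T^n))\big]=\dim\big[(N(T)\cap R(T^n))/(N(T)\cap R(T^{n+1}))\big]$ shows that for $n\geq dis(T)$ the subspaces $R(T)+N(T^n)$ literally coincide, so their closedness trivially covers the band; for the ranges $R(T^n)$ one then descends step by step using the stability lemma of quasi-Fredholm theory (if $N(T)\cap R(T^n)=N(T)\cap R(T^{n+1})$, $R(T^{n+1})$ is closed and $R(T)+N(T^n)$ is closed, then $R(T^n)$ is closed), which is available in M\"uller's book, the paper's own reference, or via Grabiner's uniform-descent theory. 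One caution: your phrasing suggests each closedness condition separately propagates from a single level $n_0\geq dis(T)$, which is stronger than the standard statements, where the downward step for $R(T^n)$ uses the closedness of $R(T)+N(T^n)$ as well; since you hold both families at every level $n\geq d$, the joint propagation suffices and your argument goes through. In short, your proof is correct and is actually more complete than the paper's, which asserts the final quasi-Fredholmness without reconciling the two indices.
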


\begin{proof}
Suppose that  $$\int_0^t e^{\lambda(t-s)}\frac{s^{\alpha-1}}{\Gamma(\alpha)}ds\notin \sigma_{qe}(S(t)).$$
Then there exists $d\in\mathds{N}$ such that for all $n\geq d$
$R[\int_0^t e^{\lambda(t-s)}\frac{s^{\alpha-1}}{\Gamma(\alpha)}ds-S(t)]^n$ and $R[\int_0^t e^{\lambda(t-s)}\frac{s^{\alpha-1}}{\Gamma(\alpha)}ds-S(t)]+N[\int_0^t e^{\lambda(t-s)}\frac{s^{\alpha-1}}{\Gamma(\alpha)}ds-S(t)]^n$ are closed
and $$dis[\int_0^t e^{\lambda(t-s)}\frac{s^{\alpha-1}}{\Gamma(\alpha)}ds-S(t)]=d.$$
Using Propositions \ref{p1}, \ref{p4} and \ref{p5}, we obtain for all $n\geq d$
 $R[\lambda-A]^n$ and $R[\lambda-A]+N[\lambda-A]^n$ are closed and $dis(\lambda-A)\leq d$.
 Therefore, $\lambda-A$ is quasi-Fredholm and hence
 $$\lambda\notin\sigma_{qe}(A).$$
\end{proof}

{\small
}
\end{document}